\journalname{}
\DeclareMathOperator{\cl}{cl}
\DeclareMathOperator{\dist}{dist}
\DeclareMathOperator{\dom}{dom}
\DeclareMathOperator{\cone}{cone}
\DeclareMathOperator{\conv}{conv}
\DeclareMathOperator{\argmin}{argmin}
\DeclareMathOperator{\tr}{tr}
\newcommand{\R}{\bar{\mathbb{R}}}
\newcommand{\fre}[1][M]{\hat{\partial}_\mathcal{#1}}
\newcommand{\nor}[2][M]{\hat{N}^{\mathcal{#1}}_{#2}}
\newcommand{\coti}[2][M]{\hat{T}^{\mathcal{#1}}_{#2}}
\newcommand{\limsub}[1][M]{\partial_\mathcal{#1}}
\newcommand{\limnor}[2][M]{N^{\mathcal{#1}}_{#2}}
\begin{document}


\title{Nonconvex Weak Sharp Minima on Riemannian Manifolds}

\author{M.\ M. Karkhaneei \and N. Mahdavi-Amiri}

\institute{Mohammad Mahdi Karkhaneei \at
  Sharif University of Technology  \\
  Tehran, Iran\\
  karkhaneei@gmail.com \and Nezam Mahdavi-Amiri \at
  Sharif University of Technology  \\
  Tehran, Iran\\
  nezamm@sharif.edu \and
 }

\date{Received: date / Accepted: date}

\maketitle

\begin{abstract}
  We are to establish necessary conditions (of the primal
  and dual types) for the set of weak sharp minima of a
  nonconvex optimization problem on a Riemannian manifold.
  Here, we are to provide a generalization of some
  characterizations of weak sharp minima for convex problems
  on Riemannian manifold introduced by Li et al. (SIAM J.
  Optim., 21 (2011), pp. 1523--1560) for nonconvex problems.
  We use the theory of the Fr\'echet and limiting
  subdifferentials on Riemannian manifold to give the
  necessary conditions of the dual type. We also consider a
  theory of contingent directional derivative and a notion
  of contingent cone on Riemannian manifold to give the
  necessary conditions of the primal type. Several
  definitions have been provided for the contingent cone on
  Riemannian manifold. We show that these definitions, with
  some modifications, are equivalent. We establish a lemma
  about the local behavior of a distance function. Using the
  lemma, we express the Fr\'echet subdifferential
  (contingent directional derivative) of a distance function
  on a Riemannian manifold in terms of normal cones
  (contingent cones), to establish the necessary conditions.
   As an application, we show how one can use
  weak sharp minima property to model a Cheeger type
  constant of a graph as an optimization problem on a
  Stiefel manifold. 
\end{abstract}

\keywords{ weak sharp minima \and Riemannian manifolds \and
  distance functions \and nonconvex functions \and generalized
  differentiation \and graph clustering }
      
\subclass{49J52 \and 90C26  \and 05C40 }

\section{Introduction}
In recent decades, extensive research has been carried out
on optimization on manifolds. These studies scatter in
various contexts such as convex \cite{udriste1994convex},
smooth \cite{AbsilTrustRei2007, absil2007optimization,
   Smith1994}, nonsmooth
\cite{HosseiniTrustRegion2016}, and over special manifolds
such as sphere \cite{AdlerNewtonSpin2002}, Stiefel manifolds
\cite{EdelmanStiefel1999}, etc. Particularly, special
attention have been devoted to nonconvex analysis and
optimization on Riemannian manifolds. Several problems in
machine learning, pattern recognition and computer vision,
can be modeled as nonconvex optimization problems on
Riemannian manifolds \cite{MR3160322, MR2678395, MR2913705,
  MR3604647, MR3069099}. Moreover, various numerical
procedures for solving nonconvex optimization problems on
Riemannian manifolds have been designed, such as line search
and trust region methods along with Newton-like methods in
smooth cases (see
\cite{AbsilTrustRei2007,absil2007optimization,AdlerNewtonSpin2002,MR2811294,MR3123829}),
and subgradient decent, gradient sampling, and proximal
algorithms in nonsmooth cases (see
\cite{Bento2015nonconPoriximal, MR3606427}).
  
Distance functions appear in various optimization methods,
such as proximal point methods, penalty methods, etc.
Generalized differential properties of distance functions
play remarkable roles in variational analysis, optimization,
and their applications. The authors of
\cite{BURKE1992,MR2179254} investigated properties of
generalized derivatives of distance functions in linear
space setting. Properties of distance functions on
a Riemannian manifold are not trivially obtained by
generalization of the corresponding properties in the linear
space setting. Li et al. \cite{LiMordukhovichWangYao} gave a
relation for the subdifferential of a convex distance
function in term of a normal cone to the corresponding set
in a Riemannian manifold setting with the non-positive
sectional curvature. They used a comparison result for
geodesic triangles in a Riemannian manifold with the
non-positive sectional curvature as a global property.
Subdifferentials and normal cones are local notions, and we
do not require any global condition on a Riemannian manifold
for investigating local properties of a distance function.
Here, we establish a lemma about local behavior of a
distance function on a manifold (called local distance
lemma). Using this lemma, we express the Fr\'echet
subdifferential (contingent directional derivative) of a
distance function on a Riemannian manifold in terms of
normal cones (contingent cones).

Then,  we investigate the weak sharp minima notion for
nonconvex optimization problems on Riemannian manifolds. We
provide generalization of some characterizations of weak
sharp minima of  \cite{LiMordukhovichWangYao} to the
nonconvex optimization problems on a Riemannian manifold
modeled in a Hilbert space. To the best of our knowledge,
our work is the first study concerning the notion of weak
sharp minima for nonconvex optimization problems on
Riemannian manifolds.

The concept of sharp minima was introduced by Polyak
\cite{PolyakSharpMinima1979} in the case of
finite-dimensional Euclidean space for the sensitivity
analysis of optimization problems and convergence analysis
of some numerical algorithms. Next, Ferris
\cite{FerrisThesis1988} extended the notion of weak sharp
minima to the situation where the optimization problem has
multiple solutions. The concept was
expanded by many authors for convex and nonconvex
optimization problems over finite and infinite dimensional
linear spaces (see
\cite{BurkeWeakSharpII2005,BurkeFerris1993,Mordu2006FreSubCal,NgWeaksharp2003,Ward1999,Ward1994,MordukhovichWeaksharp2012}).

The primary goal of our work here is to present some primal
and dual necessary conditions for the set of weak sharp
minima of a nonconvex optimization problem on Riemannian
manifold. The key ingredient of deriving the necessary
conditions is the representation of a generalized derivative of
a distance function in terms of a cone. We will use
Fr\'echet and limiting subdifferentials along with Fr\'echet
and limiting normal cones on Riemannian manifold to state
some necessary conditions of dual type for the set of weak
sharp minima of a nonconvex optimization problem. To state
some necessary conditions of primal type for the set of weak
sharp minima of a nonconvex optimization problem, we use a
contingent directional derivative and a contingent cone on
Riemannian manifold. The contingent directional derivative,
which we introduce, is closely related to the Fr\'echet
subdifferential. Several definitions have been provided for
the contingent cone on Riemannian manifold; see
\cite{MR2988725} and \cite{ZhuLedyaev2007}. We will show
that these definitions, with some modifications, are
equivalent.

 The remainder of our work is organized as
follows. In Section 2, some needed preliminaries on linear
spaces and metric spaces are given. Also, we recall some
fundamental notions of variational analysis in linear
space setting. In Section 3, we first introduce some basic
notions of Riemannian manifolds. Then, definitions and
properties of the (limiting) Fr\'echet subdifferential and
(limiting) normal cone on Riemannian manifold are provided.
Also, we introduce contingent directional derivative on
Riemannian manifold and recall some definitions of
contingent cones on Riemannian manifold and show that these
definitions, with some modifications, are equivalent.
In Section 4, we establish a local distance
lemma and use it to attain a formula for the Fr\'echet
subdifferential (the directional derivative) of a distance
function a Riemannian manifold in terms of the normal cone
(the continent cone). In Section 5, we establish some
necessary conditions for weak sharp minima in the nonconvex
case on Riemannian manifold. In Section 6, we give an
application of the results of the previous sections. In
Section 7, we provide our concluding remarks. 

\section{Linear and Metric Spaces}
We provide some definitions and symbols required from
variational analysis and topology; the reader is referred to
\cite{PenotBook2013} for more details.
We denote $\R$ as extended real numbers
$\mathbb{R}\cup\{\infty\}$. Suppose that $E$ is a Banach
space. Then, $E^*$, $\mathbb{B}_E$, and $I_E$ respectively
denote dual space, closed unit ball, and identity function
on $E$. Denote by $\conv A$ and $\cone A$, respectively, the
convex hull of and the cone of a subset $A \subset E$.
Let $(M,d)$ be a metric space and $A \subset M$. Recall that
$\cl A$ stands for the closure of $A$, and the distance
function for subset $A$ is defined by
$\dist(p;A) := \inf_{u\in M}d(p,u)$, for all $p\in M$. The
closed ball with center $p$ and radius $r >$ 0 is denoted by
$\mathbb{B}(p,r) := \{ q \in M \ | \ d(p,q) \leq r \}$.
Moreover, if $f:M \to \R$ is a function on $M$, then we
denote $\dom(f) := \{p \in M : f (p) < \infty\}$ and
$\ker(f) := \{ p \in M : f(p) = 0 \}$. We say that $f$ is
proper, if $\dom(f) \neq \emptyset$. Also, we say that $f$
is lower semicontinuous (l.s.c.) at $p\in M$, if
$f(p) \leq \liminf_{u\to p}f(u)$. We say that $f$ is locally
Lipschitz at $p \in M$ with rate $r$, if there exits a
neighbor of $p$, say $U$, such that
$| f(u) - f(p) | \leq r d(u,p)$, for all $u \in U$.
Furthermore, we say that $f$ is a Lipschitzian function with
rate $r$ (around $q\in \mathcal{M}$), if the earlier
inequality holds for all $p,u \in M$ (for all $p$ and $u$ in
a neighborhood of $q$). Let $\Omega \subset M$. The
indicator function of $\Omega$ is defined as
$\delta_{\Omega}(u):=0$ for $u \in \Omega$, and
$ \delta_{\Omega}(u) := \infty$ for $u \notin \Omega$.
Now, we recall definition of weak sharp minima on a metric
space.
\begin{definition}[weak sharp minima]
  Let $f\colon M \to \R$ be a proper function on a metric
  space $M$ and $S \subset M$. We say that
  \begin{enumerate}
  \item $p \in \Omega$ (where $\Omega := \argmin_S f$) is a
    local weak sharp minimizer for the problem
    $ \min_{ u \in S} f(u)$ with modulus $\alpha>0$, if
    there is $\epsilon>0$ such that for all
    $u \in S \cap \mathbb{B}(p,\epsilon)$, we have
    \begin{equation}
      \label{eq:3}
      f(u) \geq f(p) + \alpha \dist(u; \Omega).
    \end{equation}
  \item $\Omega := \argmin_S f$ is the set of (global) weak
    sharp minima for the problem $ \min_{ u \in S} f(u)$
    with the modulus $\alpha > 0 $, if (\ref{eq:3}) holds
    for all $p \in \Omega$ and $u\in S$.
  \end{enumerate}
\end{definition}

In the sequel, we recall definitions of some basic notions
of variational analysis in linear space setting. Suppose
that $f\colon E \to \R$ is a function on a Banach space $E$
and is finite at $p \in E$. The Fr\'echet subdifferential of
$f$ at $p$ is defined to be
\[ \fre[] f(p) := \left\lbrace x^* \in E^{*} \left|
      \liminf_{u\to p} \frac{f(u)-f(p)-\langle x^* , u-p
        \rangle}{\|u-p\|}\geq 0 \right. \right\rbrace.
\]
The limiting subdifferential of $f$ at $p$ is the set
$\limsub[] f(p)$ of all $x^* \in E^*$ such that there is a
sequence of covectors $x_i^* \in \fre[] f(p_i) $ such that
$f(p_i) \to f(p)$, and $\lim x_i^* = x^*$. The contingent
directional derivative of $f\colon E \to\bar{\mathbb{R}}$ at
$p$ in the direction $v\in E$ is defined as follows:
\begin{equation}
  f^-(p;v):=
  \liminf_{w \to v, t\downarrow 0}\frac{f(p+ t w)-f(p)}{t}.
\end{equation}
Suppose that $\Omega$ is a subset of the Banach space $E$
and $p \in \cl \Omega$. The Fr\'echet normal cone of
$\Omega$ at $p$ is defined to be
\[
  \nor[]{\Omega}(p) := \left\lbrace x^* \in E^{*} \left|
      \limsup_{u \xrightarrow{\Omega} p}\frac{\langle x^* ,
        u-p\rangle}{\|u-p\|} \leq 0 \right. \right\rbrace.
\]
The limiting normal cone of $\Omega$ at $p$ is the set
$\limnor[]{\Omega}(p)$ of limits of sequences
$\{x^*_i\} \subset E^*$ for which there is a sequence
$\{p_i\} \subset \Omega$ converging to $p$ such that
$x^*_i \in \nor[]{\Omega}(p_i)$, for all $i$. The contingent
cone of $\Omega$ at $p$ is defined as
\[
  \coti[]{\Omega}(p) := \{v \in E : \exists v_i \in E, t_i
  \downarrow 0 \text{ such that } v_i \to v, p+t_i v_i) \in
  \Omega \label{def:contiLin} \}.
\]

\begin{proposition}
  \label{pro-nordistLin}
  For a subset $\Omega$ of $E$, with $E$ being a Banach
  space, its distance function $\dist(\cdot;\Omega)$, and
  $p \in \cl\Omega$, one has
  \begin{align}
    \label{eq:DistNormalLin}
    \fre[]\dist(p;\Omega)  =\nor[]{\Omega}(p) \cap \mathbb{B}_{E^{*}}.
  \end{align}
\end{proposition}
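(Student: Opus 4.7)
The plan is to prove the equality by showing the two inclusions separately, using the fact that $\dist(p;\Omega)=0$ since $p\in\cl\Omega$, together with the $1$-Lipschitz property of the distance function.

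For the inclusion $\fre[]\dist(p;\Omega)\subset\nor[]{\Omega}(p)\cap\mathbb{B}_{E^*}$, I would fix $x^*\in\fre[]\dist(p;\Omega)$ and exploit two different families of test points in the $\liminf$ defining the Fr\'echet subdifferential. First, restricting $u$ to approach $p$ along $\Omega$ forces $\dist(u;\Omega)=0$, and the defining inequality collapses to $\limsup_{u\xrightarrow{\Omega}p}\langle x^*,u-p\rangle/\|u-p\|\leq 0$, which is exactly $x^*\in\nor[]{\Omega}(p)$. Second, to control the norm of $x^*$, I would use arbitrary $u\to p$ in $E$ together with the estimate $\dist(u;\Omega)\leq\|u-p\|$ (valid because $p\in\Omega^{cl}$); this gives $\langle x^*,u-p\rangle\leq\|u-p\|+o(\|u-p\|)$, and testing along rays $u=p+tv$ with $t\downarrow 0$ yields $\langle x^*,v\rangle\leq\|v\|$ for every $v\in E$, hence $\|x^*\|\leq 1$.

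For the reverse inclusion, fix $x^*\in\nor[]{\Omega}(p)\cap\mathbb{B}_{E^*}$ and $\varepsilon>0$. From $x^*\in\nor[]{\Omega}(p)$ there exists $\delta>0$ with $\langle x^*,q-p\rangle\leq\varepsilon\|q-p\|$ for every $q\in\Omega\cap\mathbb{B}(p,\delta)$. For $u$ sufficiently close to $p$, pick (using the infimum definition of the distance) a point $q\in\Omega$ with $\|u-q\|\leq\dist(u;\Omega)+\varepsilon\|u-p\|$; then $\|q-p\|\leq\|u-q\|+\|u-p\|\leq 2\|u-p\|+\varepsilon\|u-p\|$, which stays inside $\mathbb{B}(p,\delta)$ when $u$ is close to $p$. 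Decomposing
\[
\langle x^*,u-p\rangle=\langle x^*,u-q\rangle+\langle x^*,q-p\rangle\leq\|u-q\|+\varepsilon\|q-p\|
\]
(using $\|x^*\|\leq 1$) and substituting the two bounds gives
\[
\dist(u;\Omega)-\langle x^*,u-p\rangle\geq -\varepsilon\|u-p\|-\varepsilon(2+\varepsilon)\|u-p\|.
\]
Dividing by $\|u-p\|$ and letting $u\to p$ produces a $\liminf\geq -\varepsilon(3+\varepsilon)$, and since $\varepsilon$ is arbitrary the defining $\liminf$ of $\fre[]\dist(p;\Omega)$ is nonnegative, so $x^*\in\fre[]\dist(p;\Omega)$.

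The main subtlety is the reverse inclusion: one must simultaneously use the near-normality condition on points of $\Omega$ near $p$ and the approximate projection from an arbitrary $u$ onto $\Omega$, while ensuring that the approximate projection $q$ is still within the neighborhood on which the normal cone estimate holds. The norm bound $\|x^*\|\leq 1$ is what makes the term $\langle x^*,u-q\rangle$ absorbable into $\dist(u;\Omega)$; without it, the inclusion fails. Everything else is a bookkeeping argument combining the $1$-Lipschitz nature of $\dist(\cdot;\Omega)$ with the definitions.
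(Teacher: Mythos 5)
Your proof is correct. The paper itself states Proposition~\ref{pro-nordistLin} without proof, recalling it as a known fact in the Banach-space setting (it is essentially Theorem~1.33 in Mordukhovich's book), and your argument is the standard one: the forward inclusion by testing the defining $\liminf$ along $\Omega$ and along rays (using $\dist(u;\Omega)\leq\|u-p\|$, valid since $p\in\cl\Omega$), and the reverse inclusion via an $\varepsilon$-approximate projection $q\in\Omega$ combined with the normal-cone estimate and $\|x^*\|\leq 1$. All the quantitative bookkeeping ($\|q-p\|\leq(2+\varepsilon)\|u-p\|$ keeping $q$ in the neighborhood where the normal-cone inequality holds, and the final $-\varepsilon(3+\varepsilon)$ lower bound) checks out.
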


The following theorem relates directional derivative of the
distance function with contingent cone.

\begin{proposition}[\cite{BURKE1992}] \label{pro:dirdisLin}
  Suppose that $E$ is a Banach space and
  $p \in \Omega \subset E$. Then, for all $v \in E$, we have
  \begin{equation}\label{eq:dirdisLin}
    d^-_{\Omega}(p;v) \leq
    \dist(v; \coti[]{\Omega}(p)),
  \end{equation}
  where $d^-_{\Omega}(p;v)$ is the contingent directional
  derivative of $\dist(\cdot;\Omega)$ at $p$ in direction
  $v$. If it is further assumed that $ E$ is finite
  dimensional, then equality holds in \eqref{eq:dirdisLin}.
\end{proposition}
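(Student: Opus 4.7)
The plan is to prove the inequality \eqref{eq:dirdisLin} using only the sequential definition of $\coti[]{\Omega}(p)$, and then to upgrade it to equality in the finite-dimensional case by a compactness argument on approximate nearest points. Since $p \in \Omega$, we have $\dist(p;\Omega) = 0$, so the contingent directional derivative simplifies to $d^-_{\Omega}(p;v) = \liminf_{w \to v,\, t \downarrow 0} \dist(p+tw;\Omega)/t$.

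For the inequality, I would fix $v \in E$ and an arbitrary $w \in \coti[]{\Omega}(p)$, and invoke the defining sequences $t_i \downarrow 0$ and $w_i \to w$ with $p + t_i w_i \in \Omega$. The key trick is to shift these so they approach $v$ rather than $w$: set $v_i := v + (w_i - w)$, so that $v_i \to v$ and $p + t_i v_i = (p + t_i w_i) + t_i(v - w)$. Since $p + t_i w_i \in \Omega$, this gives $\dist(p + t_i v_i;\Omega) \leq t_i\|v - w\|$. Dividing by $t_i$ and taking the liminf along this admissible pair of sequences yields $d^-_{\Omega}(p;v) \leq \|v - w\|$, and infimizing over $w \in \coti[]{\Omega}(p)$ produces the desired bound $d^-_{\Omega}(p;v) \leq \dist(v;\coti[]{\Omega}(p))$. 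This half uses no hypothesis beyond the Banach structure.

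For the reverse inequality in finite dimension (assuming $d^-_{\Omega}(p;v) < \infty$, else the claim is trivial), I would extract from the defining liminf sequences $w_i \to v$ and $t_i \downarrow 0$ with $\dist(p + t_i w_i;\Omega)/t_i \to d^-_{\Omega}(p;v)$, and for each $i$ choose $q_i \in \Omega$ satisfying $\|q_i - (p + t_i w_i)\| \leq \dist(p + t_i w_i;\Omega) + t_i/i$. Rescaling by $u_i := (q_i - p)/t_i$ gives $p + t_i u_i = q_i \in \Omega$ and $\|u_i - w_i\| \to d^-_{\Omega}(p;v)$. The main obstacle is exactly here: to produce an element of $\coti[]{\Omega}(p)$, one needs a limit point for $\{u_i\}$. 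Because $w_i \to v$ and $\|u_i - w_i\|$ is bounded, $\{u_i\}$ is bounded, and finite-dimensionality then furnishes a convergent subsequence $u_{i_k} \to u$; by construction $u \in \coti[]{\Omega}(p)$ and $\|v - u\| = d^-_{\Omega}(p;v)$, giving $\dist(v;\coti[]{\Omega}(p)) \leq d^-_{\Omega}(p;v)$. In a general Banach space, bounded sequences need not have norm-convergent subsequences, which is precisely why the equality statement is restricted to finite-dimensional $E$ and cannot be expected to hold in the infinite-dimensional setting.
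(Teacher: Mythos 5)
The paper does not actually prove Proposition~\ref{pro:dirdisLin}; it is imported verbatim from \cite{BURKE1992} and used as a black box (the paper's own contribution is the Riemannian transfer in Theorem~\ref{thm:dirdis}). So there is no internal proof to compare against, and your argument has to stand on its own. It does: it is correct, and it is essentially the standard proof of this fact. In the first half, the translation $v_i := v + (w_i - w)$ gives $p + t_i v_i = (p+t_iw_i) + t_i(v-w)$ with $p+t_iw_i \in \Omega$, hence $\dist(p+t_iv_i;\Omega)/t_i \le \|v-w\|$ along an admissible sequence, so $d^-_{\Omega}(p;v) \le \|v-w\|$ for every $w \in \coti[]{\Omega}(p)$; infimizing over $w$ is legitimate (and the cone is nonempty since $0 \in \coti[]{\Omega}(p)$ whenever $p \in \Omega$). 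In the second half, the two-sided bound $\dist(p+t_iw_i;\Omega)/t_i \le \|u_i - w_i\| \le \dist(p+t_iw_i;\Omega)/t_i + 1/i$ does force $\|u_i - w_i\| \to d^-_{\Omega}(p;v)$, the resulting boundedness of $\{u_i\}$ plus finite-dimensional compactness yields a limit point $u \in \coti[]{\Omega}(p)$ with $\|v-u\| = d^-_{\Omega}(p;v)$, and your closing remark correctly locates the only place finite-dimensionality is used. One cosmetic point: the case $d^-_{\Omega}(p;v) = \infty$ that you set aside as ``trivial'' in fact never occurs, because $\dist(\cdot;\Omega)$ is $1$-Lipschitz and therefore $d^-_{\Omega}(p;v) \le \|v\|$; had it been possible, the reverse inequality would not be trivial there (it would require $\coti[]{\Omega}(p) = \emptyset$, which is impossible). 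It is cleaner to note the automatic finiteness than to dismiss the case.
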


\section{Variational Analysis on Riemannian Manifolds}
Here, we recall some definitions and results about
variational analysis on Riemannian manifolds which will be
useful later on; see, e.g.,
\cite{AzagraFerreraLopez2005,ZhuLedyaev2007} for more
details. We will be dealing with functions defined on
Riemannian manifolds (either finite or
infinite-dimensional). A Riemannian manifold
$(\mathcal{M}, g)$ is a $C^\infty$ smooth manifold
$\mathcal{M}$ modeled on some Hilbert space $\mathbb{H}$
(possibly infinite-dimensional), such that for every
$p \in \mathcal{M}$ we are given a scalar product
$g(p) = g_p := \langle \cdot ,\cdot \rangle_p$ on the
tangent space $T_p\mathcal{M} \simeq \mathbb{H} $ so that
$ \|x\|_p =\langle x,x \rangle _p^{1/2}$ defines an
equivalent norm on $T_p\mathcal{M}$ for each
$p \in \mathcal{M}$ and in such a way that the mapping
$p \in \mathcal{M} \to g_p \in \mathcal{S}^2(\mathcal{M}) $
is a $C^\infty $ section of the bundle of symmetric bilinear
forms. For each $p \in \mathcal{M}$, the metric $g$ induces
a natural isomorphism between $T_p\mathcal{M}$ and
$T^{*}_{p}\mathcal{M}$. So, we define the norm on
$T_p^{*}\mathcal{M}$ as $\|v^{*}\|_p^2 = g_p(v,v)$. If a
function $f : \mathcal{M} \to \mathbb{R}$ is (Fr\'echet)
differentiable at $p \in \mathcal{M}$, then norm of the
differential $ df(p) \in T^*_p\mathcal{M}$ at the point $p$
is defined by
$ \| df(p)\|_p := \sup \{df(p)(v) | v \in T_p\mathcal{M},
\|v\|_p \leq 1 \}. $ Given two points
$p, q \in \mathcal{M}$, the Riemannian distance from $p$ to
$q$ is denoted by $d_{\mathcal{M}}(p, q)$. Throughout our work here,
$\mathcal{M}$ is a Riemannian manifold modeled on a
Hilbert space.

Since Fr\'echet/limiting subdifferential and
Fr\'echet/limiting normal cone are local notions in linear
space setting, we can define these notions on Riemannian
manifold, by using a local chart. These definitions are
independent of the chosen chart; see, e.g.,
\cite{AzagraFerreraLopez2005,ZhuLedyaev2007,Pavel1982} for
more details. Here, we define these concepts on Riemannian
manifold by using exponential charts. Suppose that
$f\colon \mathcal{M} \to \R$ is a function on a Riemannian
manifold $\mathcal{M}$ and is finite at $p \in \mathcal{M}$.
The Fr\'echet subdifferential and the limiting
subdifferential of $f$ at $p$, respectively, are defined to
be $\fre f(p) := \fre[] (f \circ \exp_{p})(0) $ and
$\limsub[] f(p) := \limsub[] (f \circ \exp_p)(0)$, where
$\exp_{p}:U \to \mathcal{M}$ is the exponential map of
$\mathcal{M}$ defined on $U$, which is a sufficiently small
neighborhood of $0$ in $T^{*}_p\mathcal{M}$. Similarly, the
contingent directional derivative of
$f\colon \mathcal{M} \to \R$ at $p$ in the direction
$v\in T_p^{*}\mathcal{M}$ is defined to be
$f^-(p;v) := (f \circ \exp_p)^-(0;v)$. Let
$\Omega \subset \mathcal{M}$ and $p \in \cl \Omega$. The
Fr\'echet normal cone and the limiting normal cone of
$\Omega$ at $p$, respectively, are defined to be
$\nor{\Omega}(p) := \nor[]{\exp_{p}^{-1}\Omega_{*}}(0)$ and
$\limnor{\Omega}(p) := \limnor[]{\exp_p^{-1}\Omega_{*}}(0)$,
where $\Omega_{*}$ is the intersection of $\Omega$ with an
arbitrarily small neighborhood of $p$ on which $\exp_p^{-1}$
is defined. Recall that the Fr\'echet normal cone of
$\Omega$ at point $p$ is equal to the Fr\'echet
subdifferential of the indicator function of $\Omega$ at
$p$, that is,
\begin{eqnarray}
  \label{eq:NormalIndicator}
  \nor{\Omega}(p)=\fre  \delta_\Omega(p).
\end{eqnarray}
Similarly, for the limiting subdifferential and normal cone,
we have $ \limnor{\Omega}(p) = \limsub \delta_{\Omega}(p). $
Suppose that $\mathcal{M}$ is a submanifold of a Euclidean
space $E$, $\Omega \subset \mathcal{M}$ and
$p \in \cl \Omega$. Then, from the definition, we have
\begin{equation}\label{eq:17}
\nor{\Omega}(p) = \hat{N}_{\Omega}^{E}(p)  \cap T_p^{*}\mathcal{M}.
\end{equation}
The following proposition
states that the Fr\'echet subdifferential on Riemannian
manifold has the homotone property. The proof is directly
obtained using the definition.

\begin{proposition}[homotone property of subdifferential]
  \label{pro:homotone}
  Consider functions
  $f,g\colon\mathcal{M} \to \mathbb{\bar{R}}$ on a
  Riemannian manifold $\mathcal{M}$. Suppose that $f$ and
  $g$ are finite at $p\in \mathcal{M}$, $g \leq f$ and
  $f(p)=g(p)$. Then, we have $\fre g(p) \subset \fre f(p)$.
\end{proposition}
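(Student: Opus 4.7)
The plan is to reduce the statement to its linear-space counterpart via the exponential chart, which is precisely the setting in which the Fréchet subdifferential on $\mathcal{M}$ is defined. Set $\tilde{f} := f \circ \exp_p$ and $\tilde{g} := g \circ \exp_p$, both defined on a neighborhood $U$ of $0 \in T_p\mathcal{M}$. By the definition recalled in the excerpt, $\fre f(p) = \fre[](f \circ \exp_p)(0) = \fre[]\tilde{f}(0)$ and similarly for $g$, so it suffices to prove the inclusion $\fre[]\tilde{g}(0) \subset \fre[]\tilde{f}(0)$ in the Hilbert space $T_p\mathcal{M} \simeq \Hbb$.

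Next I would transfer the pointwise hypotheses through the chart. Since $g(q) \leq f(q)$ for every $q \in \mathcal{M}$, pulling back along $\exp_p$ gives $\tilde{g}(u) \leq \tilde{f}(u)$ for every $u \in U$. The equality $g(p) = f(p)$ at the basepoint translates into $\tilde{g}(0) = \tilde{f}(0)$, since $\exp_p(0) = p$. These are exactly the linear-space hypotheses of the classical homotone property of the Fréchet subdifferential.

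Now pick any $x^* \in \fre[]\tilde{g}(0)$. Using $\tilde{f}(u) - \tilde{f}(0) \geq \tilde{g}(u) - \tilde{g}(0)$ for $u$ near $0$, for all such $u \neq 0$ one has
\[
\frac{\tilde{f}(u) - \tilde{f}(0) - \langle x^{*}, u\rangle}{\|u\|_{p}} \;\geq\; \frac{\tilde{g}(u) - \tilde{g}(0) - \langle x^{*}, u\rangle}{\|u\|_{p}}.
\]
Taking $\liminf$ as $u \to 0$ and invoking the defining inequality for $x^* \in \fre[]\tilde{g}(0)$ yields $\liminf \geq 0$ on the left-hand side, so $x^* \in \fre[]\tilde{f}(0)$. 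Pulling back to $\mathcal{M}$ gives $\fre g(p) \subset \fre f(p)$.

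There is really no serious obstacle here: the only thing worth noting is that the argument is insensitive to which local chart one chooses, because the defining $\liminf$ is invariant under the $C^\infty$-diffeomorphism given by a change of charts around $p$, as noted in the references \cite{AzagraFerreraLopez2005,ZhuLedyaev2007}. The exponential chart is singled out purely for definiteness, and the monotonicity-in-function argument above is stable under any admissible chart choice.
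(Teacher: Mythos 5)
Your proof is correct and is exactly the argument the paper has in mind: the paper states that the proposition ``is directly obtained using the definition,'' and your reduction through the exponential chart followed by the standard comparison of difference quotients is precisely that direct verification. No gaps.
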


\subsection{Contingent Cone}
In the sequel, we define contingent cone on a Riemannian
manifold. Similarly, one can define contingent cone on a
Riemannian manifold by means of the exponential function and
the corresponding definition in linear space setting. But,
at first, for a general subset $\Omega$ of a Riemannian
manifold $\mathcal{M}$ and a point $p \in \cl \Omega$, the
notion of contingent cone (the Bouligand tangent) was
introduced by Ledyaev and Zhu \cite[Definition
3.8]{ZhuLedyaev2007}, as all tangent vectors
$v\in T_p\mathcal{M}$ so that
\begin{equation}
  \mbox{there  exist a sequence $t_i \downarrow 0$ and
    $v_i \in T_p\mathcal{M}$ such that  $v_i\to v$ and
    $c_{v_i}(t_i)\in\Omega$,}\label{eq:8}
\end{equation}
where $c_{v_i}$ is an integral curve on $\mathcal{M}$ with
$c_{v_i}(0)=p$ and $c'_{v_i}(0)=v_i$. Li et al. \cite[Remark
3.6]{LiMordukhovichWangYao} mentioned that this definition
is incomplete and required some additional restrictions on
the sequence $\{c_{v_i}\}$, which were in fact used by
Ledyaev and Zhu \cite[Proposition 3.9]{ZhuLedyaev2007}; they
are
\begin{equation}
  \mbox{ $\lim_i c_{v_i}(t_i) = p$ and
    $\lim_i c'_{v_i}(t_i)=v$.}\label{eq:6}
\end{equation}
‌‌But, it seems that these additional conditions are still
insufficient. We should add more conditions, until the
conclusions in \cite[Remark 3.6]{LiMordukhovichWangYao} and
\cite[Proposition 3.9]{ZhuLedyaev2007} hold true. That
condition is,
\begin{equation}
  \mbox{uniform convergence of $c'_{v_i}$ to $c'_{v}$,}\label{eq:7}
\end{equation}
in the sense that for every scalar function
$g \in C^1(\mathcal{M})$, $d(g \circ c_{v_i})$ uniformly
convergences to $d(g \circ c_{v})$ on a neighborhood of $0$.
With these additional conditions, we can show that the
definition of contingent cone as given in \cite[Definition
3.8]{ZhuLedyaev2007} reduces to the following definition
(see Remark \ref{rem:twodef} below).

\begin{definition}[contingent cone] \label{def:conti}
  Suppose that $\Omega$ is a subset of the Riemannian
  manifold $\mathcal{M}$ and $p \in \cl \Omega$. The
  contingent cone of $\Omega$ at $p$ is defined as
  \[
    \coti{\Omega}(p) := \{v \in T_p\mathcal{M} : \exists v_i
    \in T_p\mathcal{M}, t_i \downarrow 0 \text{ such that }
    v_i \to v, \exp_p(t_i v_i) \in \Omega \}.
  \]
\end{definition}

It is worthwhile to mention that Definition \ref{def:conti}
has been used by Hosseini and Pouryayevali \cite{MR2988725}.

\begin{remark}\label{rem:twodef}
  Here, we show that Definition \ref{def:conti} is
  equivalent to \cite[Definition 3.8]{ZhuLedyaev2007} when
  we add additional restrictions (\ref{eq:6}) and
  (\ref{eq:7}) on a sequence $\{c_{v_i}\}$ for a vector
  $v_i \in T_p\mathcal{M}$ therein. Our proof makes use of a
  technique of \cite[Proposition
  3.5]{LiMordukhovichWangYao}. Let $T_{\Omega}(p)$ be the
  set of all tangent vectors $v \in T_p\mathcal{M}$ for
  which the conditions (\ref{eq:8}), (\ref{eq:6}) and
  (\ref{eq:7}) hold true. We want to show that
  $\coti{\Omega}(p) = T_{\Omega}(p)$. Obviously,
  $\coti{\Omega}(p) \subset T_{\Omega}(p)$. We will show the
  reverse of the inclusion. Let $v \in T_{\Omega}(p)$. By
  (\ref{eq:8}), there exist a sequence $t_i \downarrow 0$
  and $v_i \in T_p\mathcal{M}$ such that $v_i\to v$ and
  $c_{v_i}(t_i)\in\Omega$. Denote
  $w_i := \exp_p^{-1}c_{v_i}(t_i)$ for sufficiently small
  $t_i$. Then, we show that
  \begin{equation}\label{eq:contiEquiv}
    v = \lim_{t_i \to 0}\frac{w_i}{t_i} ,
  \end{equation}
  which clearly shows that $v \in \coti{\Omega}(p)$. To
  establish \eqref{eq:contiEquiv}, take
  $f \in C^1(\mathcal{M})$ and, by its smoothness, obtain
  $ f(c_{v_i}(t_i))=f(\exp_p w_i)=f(p)+\langle df(p), w_i
  \rangle + o(\|w_i\|), $ which in turn implies
  \begin{equation}\label{eq:contiEquiv2}
    \langle df(p), v \rangle= \lim_{t_i \to 0+} \frac{f(c_{v_i}(t_i))-f(p)}{t_i} =  
    \lim_{t_i \to 0+} \langle df(p), \frac{w_i}{t_i} \rangle +  \lim_{t_i \to 0+}\frac{o(\|w_i\|)}{t_i}.
  \end{equation}
  Since $c'_{v_i}$ uniformly converges to $c'_v$, for a
  constant $L>0$, we have
  \[
    \|w_i\| = d(c_{v_i}(t_i),p) \leq
    \int_{0}^{t_i}\|c'_{v_i}(t)\|dt \leq L t_i.
  \]
  We get that $\frac{\|w_i\|}{t_i}$ is bounded as
  $t_i \to 0+$. The latter, together with $\lim_i w_i= 0$
  and \eqref{eq:contiEquiv2}, implies that
  \eqref{eq:contiEquiv} holds, because
  $ f\in C^1(\mathcal{M})$ was chosen arbitrarily. Thus, the
  proof is complete.
\end{remark}
 
\section{Generalized Derivatives of a Distance Function}
Generalized differential properties of distance functions
play remarkable roles in variational analysis, optimization,
and their applications. Generalized derivatives of distance
function have fundamental roles in the analysis of
optimization algorithms, such as Proximal point methods in
both linear space setting and Riemannian manifold. The
authors of \cite{BURKE1992,MR2179254} investigated
properties of generalized derivatives of distance functions
in linear space setting. Properties of distance functions on
a Riemannian manifold are not trivially obtained by
generalization of the corresponding properties in linear
spaces setting.

The following statement shows a relation between distance of
two points on a Riemannian manifold and distance of image of
two points under a chart.
\begin{proposition}[\cite{MR2569498}]
  \label{pro:chartdist}
  For
  any point $p \in \mathcal{M}$ and chart $(U', \psi)$
  around $p$ there exist a $U \subseteq U'$ and a constant
  $C \ge 1$ such that for all $p,x \in U$, we have
  \begin{equation}\label{eq:10}
    \frac 1 C \| \psi(p) - \psi(x) \| \le d(p,x) \le C \| \psi(p) - \psi(x) \|.
  \end{equation}
\end{proposition}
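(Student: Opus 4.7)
The plan is to work in chart coordinates and exploit the continuity of the metric tensor $g$. Pulled back through $\psi^{-1}$, the map $q \mapsto g(\psi^{-1}(q))$ is a continuous field of symmetric bilinear forms on the model Hilbert space $\mathbb{H}$, and since $g_p$ defines an equivalent norm on $T_p\mathcal{M}\simeq\mathbb{H}$, by continuity I can find $r>0$ and a constant $c\ge 1$ such that
\[
\tfrac{1}{c}\|v\|^2 \;\le\; \langle v,v\rangle_{g(\psi^{-1}(q))} \;\le\; c\,\|v\|^2
\]
holds for every $q\in B(\psi(p),r)\subseteq\psi(U')$ and every $v\in\mathbb{H}$. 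I then define the sought neighborhood as $U := \psi^{-1}\bigl(B(\psi(p),r/2)\bigr)$.

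For the upper estimate in \eqref{eq:10}, I would take two points $p_1,p_2\in U$ and use the straight segment $\sigma(t)=(1-t)\psi(p_1)+t\psi(p_2)$, which lies entirely in $B(\psi(p),r/2)$ by convexity. The curve $\gamma:=\psi^{-1}\circ\sigma$ is then admissible from $p_1$ to $p_2$, and the upper operator bound gives
\[
L(\gamma)=\int_0^1\|\gamma'(t)\|_{\gamma(t)}\,dt\;\le\;\sqrt{c}\int_0^1\|\sigma'(t)\|\,dt\;=\;\sqrt{c}\,\|\psi(p_1)-\psi(p_2)\|,
\]
hence $d(p_1,p_2)\le\sqrt{c}\,\|\psi(p_1)-\psi(p_2)\|$.

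The lower estimate is the step I expect to require the most care, because an arbitrary piecewise $C^1$ curve $\gamma:[0,1]\to\mathcal{M}$ from $p_1$ to $p_2$ need not remain in the chart domain, so the lower operator bound cannot be applied globally. The remedy is a barrier argument built into the choice $U=\psi^{-1}(B(\psi(p),r/2))$: either $\gamma$ stays inside $\psi^{-1}(B(\psi(p),r))$, in which case $\psi\circ\gamma$ connects $\psi(p_1)$ to $\psi(p_2)$ in $\mathbb{H}$ and the lower bound yields $L(\gamma)\ge\tfrac{1}{\sqrt{c}}\|\psi(p_1)-\psi(p_2)\|$; or $\gamma$ exits this ball, in which case the portion of $\psi\circ\gamma$ between the first exit point and $\psi(p_1)$ has Euclidean length at least $r/2\ge \tfrac{1}{2}\|\psi(p_1)-\psi(p_2)\|$ (since $p_1,p_2\in U$), so $L(\gamma)\ge \tfrac{1}{2\sqrt{c}}\|\psi(p_1)-\psi(p_2)\|$.

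Taking the infimum over all such curves gives $d(p_1,p_2)\ge\tfrac{1}{2\sqrt{c}}\|\psi(p_1)-\psi(p_2)\|$, and setting $C:=2\sqrt{c}$ delivers both inequalities of \eqref{eq:10}. The only genuinely delicate ingredient is the exit analysis in the lower bound; the half-radius shrinkage from $r$ to $r/2$ in the definition of $U$ is precisely what makes the Euclidean barrier of width $r/2$ dominate the chart distance between any two points of $U$, uniformly in $p_1,p_2$.
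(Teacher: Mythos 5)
The paper does not prove Proposition~\ref{pro:chartdist} at all: it is imported verbatim from the cited reference, so there is no in-text argument to compare yours against. Your proof is the standard chart-comparison argument and is essentially correct: uniform two-sided bounds on the pulled-back metric over a coordinate ball give the upper estimate via the straight segment, and the lower estimate via a length comparison together with a barrier for curves that leave the ball; the half-radius shrinkage and the resulting constant $C=2\sqrt{c}$ are exactly right.

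One point deserves to be made explicit in the exit case. You assert that the portion of $\psi\circ\gamma$ up to the first exit from $\psi^{-1}(B(\psi(p),r))$ has Euclidean length at least $r/2$; this presupposes that along that portion the coordinates actually reach (or come arbitrarily close to) the sphere $\|q-\psi(p)\|=r$, i.e.\ that a curve cannot leave $\psi^{-1}(B(\psi(p),r))$ while its coordinates stay in a strictly smaller ball. In finite dimensions this is automatic because $\psi^{-1}(\bar{B}(\psi(p),s))$ is compact, hence closed in $\mathcal{M}$, for every $s<r$ with $\bar{B}(\psi(p),r)\subset\psi(U')$. In the infinite-dimensional Hilbert-manifold setting of this paper closed balls are not compact, so you should first shrink $r$ so that $\psi^{-1}(\bar{B}(\psi(p),r))$ is closed in $\mathcal{M}$ (possible since a manifold point has a neighborhood base of closed sets inside $U'$); with that preliminary reduction the barrier argument goes through verbatim. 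This is a repairable technicality, not a flaw in the strategy.
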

Now, Suppose that $\Omega \subset \mathcal{M}$ and
$p \in \cl \Omega$. For every $r> 0$, denote
$\Omega_r:=\Omega \cap \mathbb{B}(p,r)$. In Proposition
\ref{pro:chartdist}, by setting $h = \exp_p ^{-1}$ and
taking supremum over ${x \in \Omega_r}$, one
gets\footnote{See: https://mathoverflow.net/q/301064.}
\begin{equation}
  \label{eq:2}
  \frac 1 C \dist( \exp_p ^{-1} (u); \exp_p ^{-1} (\Omega_r))  \le d(u;\Omega_r) \le C \dist( \exp_p ^{-1} (u) ; \exp_p ^{-1} (\Omega_r) ) \ ,
\end{equation}
which is an local estimate of a distance function on
a Riemannian manifold in terms of normal coordinates. By
these estimates, one can obtain some properties of a
distance function on a Riemannian manifold from corresponding
results in linear space setting. For example, by using
homotone property of the Fr\'echet subdifferential and a
well-known result in linear space setting (Proposition
\ref{pro-nordistLin}), we get
\begin{equation}
  \label{eq:1}
  \frac{1}{C}\nor{\Omega}(p) \cap \mathbb{B}_{T_p^*\mathcal{M}}
  \leq \fre \dist(p;\Omega)
  \leq C\nor{\Omega}(p) \cap \mathbb{B}_{T_p^*\mathcal{M}}.
\end{equation} 
Li et al. \cite{LiMordukhovichWangYao} established
\eqref{eq:1} with $C=1$ for a convex subset of a Riemannian
manifold with the non-positive sectional curvature. In their
proof, there were two fundamental points: first, since the
authors used the convex calculus on manifolds, it was
necessary to assume that the manifolds were Hadamard
manifolds and $\Omega$ was a geodesic convex set so that the
distance function $ \dist(\cdot; \Omega)$ be a convex
function; Second, they used a comparison result for geodesic
triangles in Hadamard manifold, which was a global
property. Since we are to use the nonsmooth calculus on
manifolds, we do not need the distance function to be
convex. On the other hand, since the Fr\'echet
subdifferential and normal cone are local notions, we can
prove \eqref{eq:1} with $C=1$ for an arbitrary subset of an
arbitrary Riemannian manifold with a better local estimate
of a distance function than \eqref{eq:2}.

\begin{lemma}[local distance lemma]
  \label{local-distance-lemma}
  Let $\Omega $ be a subset of $\mathcal{M}$ and
  $p \in \cl \Omega$. For sufficiently small $r> 0$, denote
  $\Omega_r := \Omega \cap \mathbb{B}(p,r)$. For each
  $u \in \mathbb{B}(p,r)$, we have
  \begin{equation*}
    \frac{\dist(\exp_p^{-1}u; \exp_p^{-1} \Omega_r)}{1 + |o(r)|}  \leq
    \dist(u; \Omega_r) \leq 
    (1 + |o(r)|)\dist(\exp_p^{-1}u; \exp_p^{-1} \Omega_r).
  \end{equation*}
\end{lemma}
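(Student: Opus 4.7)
The plan is to exploit that $\exp_p^{-1}$ provides \emph{normal coordinates} at $p$, in which the metric is infinitesimally Euclidean to second order; this yields the sharp $1+|o(r)|$ factor claimed in the lemma, which is genuinely stronger than the fixed multiplicative constant $C$ furnished by Proposition \ref{pro:chartdist}.

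First I would shrink $r>0$ so that $\exp_p$ is a $C^\infty$-diffeomorphism from a neighborhood $U_r$ of $0 \in T_p\mathcal{M}$ onto $\mathbb{B}(p,r)$, and so that $U_r \subset \mathbb{B}(0,Cr)$ via Proposition \ref{pro:chartdist} applied with $\psi := \exp_p^{-1}$. The cornerstone of the argument is then a refined local estimate for the exponential map itself: for all $v, w \in U_r$,
\begin{equation*}
  \frac{\|v-w\|}{1+|o(r)|} \leq d(\exp_p v, \exp_p w) \leq (1+|o(r)|)\,\|v-w\|.
\end{equation*}
To establish this, I would use that at the origin of the exponential chart the metric tensor satisfies $g_{ij}(0) = \delta_{ij}$ with vanishing first partial derivatives (this is exactly what makes the coordinates \emph{normal}), whence $g_{ij}(x) = \delta_{ij} + O(\|x\|^2)$ uniformly on $U_r$. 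Integrating the length functional along curves joining $\exp_p v$ to $\exp_p w$ and comparing with the Euclidean length of their preimages in $T_p\mathcal{M}$, the multiplicative ratio between Riemannian and Euclidean lengths is $1 + O(r^2) = 1 + o(r)$; taking infima over such curves yields the displayed double inequality.

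Given the above, the conclusion of the lemma is immediate. For $u \in \mathbb{B}(p,r)$, set $v := \exp_p^{-1}(u)$; as $q$ ranges over $\Omega_r$, the point $w := \exp_p^{-1}(q)$ ranges over $\exp_p^{-1}\Omega_r$. Applying the refined estimate to each such pair $(v,w)$ and taking infima over $w$ gives
\begin{equation*}
  \dist(u;\Omega_r) = \inf_{w}\,d(\exp_p v,\exp_p w) \leq (1+|o(r)|)\inf_{w}\|v-w\| = (1+|o(r)|)\dist(v;\exp_p^{-1}\Omega_r),
\end{equation*}
and the symmetric argument, using the left half of the displayed double inequality, delivers the lower bound.

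The main obstacle is the sharpening from the crude constant $C$ of Proposition \ref{pro:chartdist} to $1+|o(r)|$. This improvement is genuinely special to the \emph{exponential} chart, because in a generic chart $(U',\psi)$ the first partial derivatives of $g_{ij}$ at $\psi(p)$ need not vanish and only a fixed multiplicative constant is available. In the Hilbert-manifold setting one must additionally verify that the $O(\|x\|^2)$ control on $g_{ij}-\delta_{ij}$ is uniform over directions in $\mathbb{H}$, which follows from the standing hypothesis that $p \mapsto g_p$ is a $C^\infty$ section of the bundle of symmetric bilinear forms on $\mathcal{M}$.
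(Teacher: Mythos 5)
Your proposal is correct and follows essentially the same route as the paper: both reduce the lemma to the two-sided comparison $d(u_1,u_2) = \bigl(1 \pm |o(r)|\bigr)\,\|\exp_p^{-1}u_1 - \exp_p^{-1}u_2\|$, prove it by comparing Riemannian and Euclidean lengths of curves using the second-order agreement of the pulled-back metric with $g_p$ in normal coordinates, and then take infima over $\Omega_r$. The only difference is that the paper invokes the explicit curvature expansion $f_v = g_p + \tfrac13 R_p(v,\cdot,v,\cdot) + O(\|v\|^3)$, which yields the quantitative refinement $1 \pm \tfrac{\|R_p\|}{6}r^2 + O(r^3)$ recorded in the subsequent remark, whereas you use only the qualitative $O(r^2)$ bound.
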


\begin{proof} Let $r$ be sufficiently small such that
  $\exp_p^{-1}$ is defined on $\mathbb{B}(p,r)$. It is
  enough to show that for every
  $u_{1},u_2 \in \mathbb{B}(p,r)$, we have
  \begin{equation}\label{eq:9-1}
    (1 - |o(r)|)d(v_1, v_2) \leq
    d(u_{1}, u_{2}) \leq 
    (1 + |o(r)|)d(v_1, v_2),
  \end{equation}
  where $v_1 := \exp_p^{-1}u_1$ and $v_2 = \exp_p^{-1} u_2$.
  Suppose that $\gamma(t) = v_1 + t(v_2-v_1)$,
  $0\leq t \leq 1$, is the straight line segment joining
  $v_1$ and $v_2$. Denote
  $c(t) := \exp_p\gamma(t), 0 \leq t \leq 1$. Let
  $ f = \exp^{*}_p(g)$ be the pull back of the metric $g$ of
  $\mathcal{M}$ in $\exp_{p}^{-1} \mathbb{B}(p,r)$. We have
  $\dot{c}(t) = (D\exp_p)_{\gamma(t)}(\dot{\gamma}(t))$ and
  \begin{equation*}
    d(u_1,u_2) \leq \int_0^1 \sqrt{g_{c(t)}(\dot{c}(t), \dot{c}(t))}dt =
    \int_0^1 \sqrt{f_{\gamma(t)}(\dot{\gamma}(t),\dot{\gamma}(t))}dt.
  \end{equation*}
  According to \cite[Theorem~5.5]{MR1335233}, we have the
  following Taylor's expansion of $f_v$ at $0$:
  $f_{v} = g_p+ q_v + h_{v}$,  for  $|v| \to 0$,
  where $q_v(w_1,w_2) := \frac 13 R_p(v,w_1,v,w_2)$ is a
  symmetric bilinear function obtained from the Riemann
  curvature $R_p$ of $\mathcal{M}$ and $h_{v}$ is a bilinear
  function on $T_{v}T_p\mathcal{M}$ whose norm is of order
  $O(|v|^{3})$. So, we have
  \begin{eqnarray*}
    d(u_1,u_2) &\leq& \int_0^1\sqrt{g_p(\dot{\gamma}, \dot{\gamma})+q_{\gamma(t)}(\dot{\gamma},\dot{\gamma}) + h_{\gamma(t)}(\dot{\gamma},\dot{\gamma}) } \,dt \\
               &=& \int_0^1
                   \sqrt{ 1
                   +
                   \frac 13 \left(\frac{R_p(\gamma,\dot{\gamma},\gamma,\dot{\gamma})}
                   {g_p(\dot{\gamma}, \dot{\gamma}) g_p(\gamma,\gamma)} \right)g_p(\gamma,\gamma)
                   +
                   \frac{h_{\gamma(t)}(\dot{\gamma}, \dot{\gamma})}
                   {g_p(\dot{\gamma}, \dot{\gamma})}
                   }
                   \sqrt{g_p(\dot{\gamma}, \dot{\gamma})}\,dt
    \\
               &\leq& \big(1 + \frac{\|R_p\|}{6}r^2 + O(r^3)\big)d(v_1,v_2),
  \end{eqnarray*}
  where $\|R_p\|$ is the supremum of $R_p(w_1,w_2,w_3,w_4)$
  over all $w_1,w_2,w_3,w_4 \in T_p\mathcal{M}$ with norms
  equal to $1$. Note that since $R_p$ is a continuous
  multilinear function, $\|R_p\|$ is finite. So, the second
  inequality of \eqref{eq:9-1} is established. To establish
  the first inequality of \eqref{eq:9-1}, we consider an
  arbitrary curve $c$ joining $u_1$ and $u_2$ in
  $\mathbb{B}(p,r)$. Then, using a similar technique as
  above, one can show that the length of $c$ is at least
  $\big(1 - \frac{\|R_p\|}{6}r^2 + O(r^3)\big)d(v_1,v_2)$.
  So, by taking the infimum, the desired inequality is at hand.
  Therefore, the proof is complete. \qed
\end{proof}

\begin{remark}
  The proof of Lemma \ref{local-distance-lemma} gives a
  tighter estimate of $o(r)$. Indeed, we have
  $ \dist(u; \Omega_r)= \big(1 \pm \frac{\|R_p\|}{6}r^2 +
  O(r^3)\big)\dist(\exp_p^{-1}u; \exp_p^{-1} \Omega_r).$
\end{remark}

Now, we can prove \eqref{eq:1} with $C=1$ in a general
setting, which plays an essential role for stating some
necessary conditions of dual type for the set of weak sharp
minima of a nonconvex optimization problem in Section 5.

Note that results of this type, relating subdifferential of
the distance function and normals to the corresponding set,
are known for various subdifferentials in general nonconvex
settings of Banach spaces and are of great importance for
many aspects of variational analysis; e.g., see
\cite{MordukhovichBookI2006,PenotBook2013}.

\begin{theorem}\label{thm-nordist}
  With $\Omega$ a subset of $\mathcal{M}$, for the distance
  function $\dist(\cdot;\Omega)$, and $p \in \cl\Omega$, one
  has
  \begin{align}
    \label{eq:DistNormal}
    \fre\dist(p;\Omega)  =\nor{\Omega}(p) \cap \mathbb{B}_{T_p^*\mathcal{M}}.
  \end{align}
\end{theorem}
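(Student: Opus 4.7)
The plan is to reduce the manifold identity to its linear analogue, Proposition \ref{pro-nordistLin}, by pushing everything through the exponential chart at $p$. Introduce the two auxiliary functions on $T_p\mathcal{M}$,
\[
\phi(v) := \dist(\exp_p(v);\Omega), \qquad \psi(v) := \dist(v;\exp_p^{-1}\Omega_{*}),
\]
defined for $v$ in a small neighborhood of $0$. By the chart-based definitions of $\fre$ and $\nor$ on $\mathcal{M}$, one has $\fre\dist(p;\Omega) = \fre[]\phi(0)$ and $\nor{\Omega}(p)=\nor[]{\exp_p^{-1}\Omega_{*}}(0)$. Applied in the Hilbert space $T_p\mathcal{M}$, Proposition \ref{pro-nordistLin} yields $\fre[]\psi(0) = \nor[]{\exp_p^{-1}\Omega_{*}}(0)\cap\mathbb{B}_{T_p^{*}\mathcal{M}}$. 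Hence the entire theorem reduces to the single equality $\fre[]\phi(0)=\fre[]\psi(0)$ in the linear space $T_p\mathcal{M}$.

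The next step is to show that $\phi$ and $\psi$ differ by $o(\|v\|)$ as $v\to 0$, which implies the equality of their Fr\'echet subdifferentials at $0$. First, note that $p\in\cl\Omega$ ensures $\phi(0)=\psi(0)=0$ and that $\psi(v)\le\|v\|$ (take a sequence in $\exp_p^{-1}\Omega_{*}$ approaching $0$). For each $v$ close to $0$, choose $r=r(v)$ comparable to $\|v\|$ (for instance $r:=3\|v\|$); then the infimum defining $\phi(v)$ is attained, up to an arbitrarily small error, on $\Omega_r$, so $\phi(v)=\dist(\exp_p(v);\Omega_r)$, and analogously $\psi(v)=\dist(v;\exp_p^{-1}\Omega_r)$. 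Invoking Lemma \ref{local-distance-lemma} gives
\[
(1-|o(r)|)\,\psi(v)\ \le\ \phi(v)\ \le\ (1+|o(r)|)\,\psi(v).
\]
Since $\psi(v)\le\|v\|$ and $r\to 0$ as $v\to 0$, one concludes $|\phi(v)-\psi(v)|\le|o(r)|\,\|v\|=o(\|v\|)$ as $v\to 0$.

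Finally, the standard observation about the Fr\'echet subdifferential in Banach spaces---that two functions agreeing at a point and differing by $o(\|v\|)$ nearby share the same Fr\'echet subdifferential at that point---yields $\fre[]\phi(0)=\fre[]\psi(0)$, and combining this with the identifications of the first paragraph proves $\fre\dist(p;\Omega)=\nor{\Omega}(p)\cap\mathbb{B}_{T_p^{*}\mathcal{M}}$.

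The only delicate point is the localization: one must confirm that for sufficiently small $v$ the infima defining $\phi(v)$ and $\psi(v)$ are realized (or $\varepsilon$-realized) inside $\mathbb{B}(p,r)$ and inside $\exp_p^{-1}\mathbb{B}(p,r)$ respectively, with $r$ synchronized to $\|v\|$ so that Lemma \ref{local-distance-lemma}'s error $|o(r)|$ becomes $o(1)$ as $v\to 0$. Once this bookkeeping is done the argument is purely a transport through the chart, with no curvature or convexity hypothesis needed.
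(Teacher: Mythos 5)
Your proof is correct and follows essentially the same route as the paper: transport the problem through the exponential chart, control the discrepancy between $\dist(\cdot;\Omega)$ and the chart-level distance via Lemma~\ref{local-distance-lemma}, and invoke Proposition~\ref{pro-nordistLin}. The only cosmetic difference is that you synchronize $r$ with $\|v\|$ to obtain a single $o(\|v\|)$ estimate giving both inclusions at once, whereas the paper fixes $r$, uses the homotone property to derive each inclusion with a $(1+|o(r)|)$ factor, and then lets $r\to 0$.
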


\begin{proof}
  By local distance lemma
  (Lemma~\ref{local-distance-lemma}), for sufficiently small
  values of $r$, we have
  $
    \dist(u; \Omega) = \dist(u; \Omega_r) \leq (1 +
    |o(r)|)\dist(\exp_p^{-1}u; \exp_p^{-1} \Omega_r),
  $
   for each $ u\in \mathbb{B}(0,r/2)$.  Now, the homotone
  property of subdifferential and the similar property in
  linear space setting (Proposition \ref{pro-nordistLin})
  imply that
  \begin{eqnarray*}
    \fre \dist(p; \Omega) &\leq& (1 + |o(r)|) \fre \dist(0;
                                 \exp_p^{-1} \Omega_r) \\ & =& 
                                                               (1 + |o(r)|) \nor[]{\exp_p^{-1}\Omega_r}(0)\cap
                                                               \mathbb{B}_{T_p^{*}\mathcal{M}} \\ & =&
                                                                                                       (1 + |o(r)|)\nor[M]{\Omega}(p)\cap
                                                                                                       \mathbb{B}_{T_p^{*}\mathcal{M}}.
  \end{eqnarray*}
  By $r \to 0$, we have
  $\fre \dist(p;\Omega) \leq \nor{\Omega} (p) \cap
  \mathbb{B} _{T^{*}_p\mathcal{M}}$. The reverse of the
  inequality can be established similarly, to complete the
  proof. \qed
\end{proof}

Immediately, we have the following corollary from Theorem
\ref{thm-nordist}.

\begin{corollary}
  With the notation of Theorem~\ref{thm-nordist}, we have
  \begin{align*}
    \nor{\Omega}(p) = \cone   \fre\dist(p;\Omega).
  \end{align*}
\end{corollary}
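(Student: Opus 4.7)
The plan is to deduce the corollary directly from Theorem~\ref{thm-nordist} by applying the \texttt{cone} operator to both sides of \eqref{eq:DistNormal}. Concretely, Theorem~\ref{thm-nordist} gives
\[
  \fre\dist(p;\Omega) = \nor{\Omega}(p) \cap \mathbb{B}_{T_p^*\mathcal{M}},
\]
so it suffices to establish the set identity
\[
  \cone\bigl(\nor{\Omega}(p) \cap \mathbb{B}_{T_p^*\mathcal{M}}\bigr) = \nor{\Omega}(p).
\]

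To see this, I would first note that $\nor{\Omega}(p)$ is itself a cone (this is immediate from the definition of the Fr\'echet normal cone on $\mathcal{M}$ via the exponential chart, combined with the fact that the Fr\'echet normal cone on a Banach space is closed under nonnegative scaling). Hence the inclusion $\cone(\nor{\Omega}(p) \cap \mathbb{B}_{T_p^*\mathcal{M}}) \subset \nor{\Omega}(p)$ is trivial, since the right-hand side is a cone containing $\nor{\Omega}(p) \cap \mathbb{B}_{T_p^*\mathcal{M}}$.

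For the reverse inclusion, I would take an arbitrary $x^* \in \nor{\Omega}(p)$. If $x^* = 0$, it already belongs to $\nor{\Omega}(p) \cap \mathbb{B}_{T_p^*\mathcal{M}}$, hence to the cone hull. Otherwise, set $y^* := x^*/\|x^*\|_p$, so that $y^* \in \mathbb{B}_{T_p^*\mathcal{M}}$ and, by the cone property of $\nor{\Omega}(p)$, also $y^* \in \nor{\Omega}(p)$. Then $x^* = \|x^*\|_p\, y^*$ lies in $\cone(\nor{\Omega}(p) \cap \mathbb{B}_{T_p^*\mathcal{M}})$, finishing the proof.

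There is no real obstacle here; the entire content of the corollary is the observation that any element of a cone can be normalized into the unit ball and then scaled back, so that the intersection with $\mathbb{B}_{T_p^*\mathcal{M}}$ loses no information once the \texttt{cone} operator is applied. The nontrivial work has already been done in Theorem~\ref{thm-nordist} via the local distance lemma.
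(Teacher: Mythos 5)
Your argument is correct and is exactly the route the paper intends: the paper states the corollary as an immediate consequence of Theorem~\ref{thm-nordist}, and the content you spell out (that the Fr\'echet normal cone is closed under nonnegative scaling, so intersecting with the unit ball and then taking the conic hull recovers the whole cone) is precisely the omitted verification. No gaps.
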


The following theorem relates directional derivative of the
distance function with contingent cone. It immediately
follows from a corresponding property in linear space
setting by using the local distance lemma. We use this
property to obtain a necessary condition for weak sharp
minima. This proposition follows immediately from a similar
result in linear spaces \cite[Theorem 4]{BURKE1992} by using
the mentioned local distance lemma above.

\begin{theorem}\label{thm:dirdis}
  Suppose that $\mathcal{M}$ is a Riemannian manifold and
  $p \in \Omega \subset \mathcal{M}$. Then, for all
  $v \in T_p\mathcal{M}$, we have
  \begin{equation}\label{eq:dirdis}
    d^-_{\Omega}(p;v) \leq
    \dist(v; \coti{\Omega}(p)),
  \end{equation}
  where $d^-_{\Omega}(p;v)$ is the directional derivative of
  $\dist(\cdot;\Omega)$ at $p$ in direction $v$. If it is
  further assumed that $\mathcal{M}$ is finite dimensional,
  then equality holds in \eqref{eq:dirdis}.
\end{theorem}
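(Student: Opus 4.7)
The plan is to reduce the manifold statement to its Banach-space counterpart (Proposition \ref{pro:dirdisLin}) via the exponential chart at $p$, with the metric distortion between $\mathcal{M}$ and $T_p\mathcal{M}$ controlled by the local distance lemma. Note that by definition $d^-_\Omega(p;v) = (\dist(\cdot;\Omega)\circ \exp_p)^-(0;v)$, and the contingent cone on $\mathcal{M}$ can be unwound to $\coti{\Omega}(p) = \coti[]{\exp_p^{-1}\Omega_*}(0)$: a direct comparison of the two definitions shows that the sequences $v_i\to v$ witnessing $v\in\coti{\Omega}(p)$ via $\exp_p(t_i v_i)\in\Omega$ are exactly the sequences witnessing $v\in\coti[]{\exp_p^{-1}\Omega_*}(0)$ via $t_i v_i\in\exp_p^{-1}\Omega_*$ (for $t_i v_i$ small enough both conditions coincide). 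So both sides of the inequality already live in the Hilbert space $T_p\mathcal{M}$.

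First I would fix $r>0$ small enough that $\exp_p^{-1}$ is a diffeomorphism on $\mathbb{B}(p,r)$ and set $\Omega_r := \Omega \cap \mathbb{B}(p,r)$. For $w$ in a bounded neighborhood of $v$ and $t\downarrow 0$, the point $\exp_p(tw)$ eventually lies in $\mathbb{B}(p,r/2)$; since $p\in\Omega$, the infimum defining $\dist(\exp_p(tw);\Omega)$ is attained over $\Omega_r$, so $\dist(\exp_p(tw);\Omega)=\dist(\exp_p(tw);\Omega_r)$. The analogous truncation holds for the tangent-space distance $\dist(tw;\exp_p^{-1}\Omega)=\dist(tw;\exp_p^{-1}\Omega_r)$.

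Second, the local distance lemma yields
\[
\dist(\exp_p(tw);\Omega_r)\leq (1+|o(r)|)\dist(tw;\exp_p^{-1}\Omega_r).
\]
Because the distortion $(1+|o(r)|)$ depends only on $r$ and not on $(t,w)$, it factors out of the joint $\liminf$ as $w\to v$, $t\downarrow 0$, giving
\[
d^-_\Omega(p;v) \leq (1+|o(r)|)\, d^-_{\exp_p^{-1}\Omega_r}(0;v).
\]
Applying Proposition \ref{pro:dirdisLin} in the Hilbert space $T_p\mathcal{M}$ bounds the right-hand side by $(1+|o(r)|)\dist(v;\coti[]{\exp_p^{-1}\Omega_r}(0))=(1+|o(r)|)\dist(v;\coti{\Omega}(p))$, and letting $r\to 0$ delivers the desired inequality.

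Third, for the finite-dimensional case, I would run the symmetric argument using the lower bound in the local distance lemma together with the equality clause of Proposition \ref{pro:dirdisLin}, which is valid because $T_p\mathcal{M}$ is finite-dimensional when $\mathcal{M}$ is. This gives $d^-_\Omega(p;v)\geq (1-|o(r)|)\dist(v;\coti{\Omega}(p))$, and passing $r\to 0$ yields equality. The main bookkeeping obstacle is precisely the interchange of the distortion factor with the joint $\liminf$; this is legitimate only because the factor is independent of the variables $(t,w)$ over which the $\liminf$ is taken, and because the truncation to $\Omega_r$ does not affect the distances once $tw$ is sufficiently small. These two observations are the load-bearing technical points of the argument.
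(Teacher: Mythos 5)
Your proposal is correct and follows essentially the same route as the paper: unwind both sides through the exponential chart, apply the local distance lemma to pass to the tangent space with a $(1+|o(r)|)$ distortion, invoke Proposition \ref{pro:dirdisLin}, identify $\coti[]{\exp_p^{-1}\Omega_r}(0)$ with $\coti{\Omega}(p)$, and let $r \to 0$ (with the symmetric argument for equality in finite dimensions). You merely make explicit some bookkeeping the paper leaves implicit, namely the truncation to $\Omega_r$ and the legitimacy of pulling the distortion factor out of the joint $\liminf$.
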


\begin{proof} We have
  \begin{equation*} \frac{\dist(\exp_ptw ; \Omega) -
      \dist(p;\Omega)}{t} \leq (1 + |o(r)|) \frac{\dist (t
      w;\exp_p^{-1} \Omega_{r})-0}{t}.
  \end{equation*} By taking $\lim\inf$ as $t \downarrow 0$
  and $w \to v$, and by using a similar property in linear
  space setting (Proposition \ref{pro:dirdisLin}), we have $
  d^-_{\Omega}(p;v)\leq \dist (v;
  \coti[]{\exp_p^{-1}\Omega_r}(0)) = \dist(v;
  \coti{\Omega}(p))$. Similarly, one can prove that if it is
  further assumed that $\mathcal{M}$ is finite dimensional,
  then equality holds in \eqref{eq:dirdis}.
  \qed
\end{proof}

The following corollary is now at hand.
\begin{corollary}
  With the notation of Theorem \ref{thm:dirdis}, if
  $p \in \Omega \subset \mathcal{M}$, then
  \[
    \coti{\Omega}(p) \subset \{ v : d^-_{\Omega}(p;v) \leq
    0\},
  \]
  and equality holds when $\mathcal{M}$ is finite
  dimensional.
\end{corollary}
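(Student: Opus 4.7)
The corollary follows almost immediately from Theorem \ref{thm:dirdis}, so my plan is essentially to package the two inclusions of \eqref{eq:dirdis} together with one geometric observation: $\dist(v;\coti{\Omega}(p)) = 0$ iff $v$ lies in the closure of $\coti{\Omega}(p)$, and the contingent cone is closed.

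For the forward inclusion $\coti{\Omega}(p) \subset \{v : d^-_{\Omega}(p;v)\le 0\}$, which holds in arbitrary dimension, I would take $v \in \coti{\Omega}(p)$ and observe that trivially $\dist(v; \coti{\Omega}(p)) = 0$. Applying Theorem \ref{thm:dirdis} then gives $d^-_{\Omega}(p;v) \le \dist(v;\coti{\Omega}(p)) = 0$, which is the desired membership. (One could note in passing that since $p \in \Omega$, the quotient $[\dist(\exp_p tw;\Omega)-\dist(p;\Omega)]/t$ is nonnegative for $t>0$, so in fact $d^-_{\Omega}(p;v) = 0$, but the weaker inequality is all the statement asks for.)

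For the reverse inclusion under finite dimensionality, I would take $v$ with $d^-_{\Omega}(p;v)\le 0$. In this setting Theorem \ref{thm:dirdis} yields the equality $d^-_{\Omega}(p;v) = \dist(v;\coti{\Omega}(p))$, so $\dist(v;\coti{\Omega}(p))\le 0$. Since distance is nonnegative, $\dist(v;\coti{\Omega}(p))=0$. To conclude $v \in \coti{\Omega}(p)$ one needs the contingent cone to be closed; this is standard and I would verify it by a diagonal extraction directly from Definition \ref{def:conti}: given $v_n \to v$ with each $v_n \in \coti{\Omega}(p)$, pick for each $n$ sequences $v_n^{(k)} \to v_n$ and $t_n^{(k)} \downarrow 0$ with $\exp_p(t_n^{(k)} v_n^{(k)}) \in \Omega$, and extract a diagonal $(v_n^{(k_n)}, t_n^{(k_n)})$ converging to $(v,0)$ with $\exp_p(t_n^{(k_n)} v_n^{(k_n)}) \in \Omega$, so that $v \in \coti{\Omega}(p)$.

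There is no real obstacle here beyond assembling the pieces; the substantive content has already been done in Theorem \ref{thm:dirdis} (via the local distance lemma). The only mildly delicate point is the closedness of $\coti{\Omega}(p)$, which is required so that $\dist(v;\coti{\Omega}(p))=0$ implies actual membership rather than mere closure-membership, but this is a routine diagonal argument that works in the infinite-dimensional case as well.
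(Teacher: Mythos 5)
Your argument is correct and is exactly the derivation the paper intends: the corollary is stated as an immediate consequence of Theorem \ref{thm:dirdis} with no written proof, and your two inclusions (distance zero for members of the cone, and equality plus closedness of the contingent cone in finite dimensions) are the natural way to make that ``now at hand'' precise. The only detail you add beyond the paper is the explicit verification that $\coti{\Omega}(p)$ is closed, which is standard (it coincides with the linear contingent cone of $\exp_p^{-1}\Omega_*$ at $0$) and your diagonal argument for it is fine.
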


\begin{remark}
  To prove Theorem \ref{thm-nordist} and Theorem
  \ref{thm:dirdis}, we do not need the underlying Riemannian
  manifold to be a Hadamard manifold. But, for the
  development of some optimization techniques on Riemannian
  manifolds, such as variational inequalities
  \cite{MR1942573} and equilibrium problems
  \cite{MR3192427}, properties of Hadamard manifolds are
  indeed essential; see Krist\'aly \cite[Remark
  5.1]{MR3192427}.
\end{remark}

\section{Nonconvex Weak Sharp Minima on a Riemannian
  Manifold}\label{sec:necess-cond-weak}
Here, we give some necessary conditions of the primal type
and the dual type for the set of weak sharp minima of an
optimization problem on a (possibly infinite dimensional)
Riemannian manifold. Formerly, Li at al.
\cite{LiMordukhovichWangYao} provided some characterizations
of weak sharp minima in the case of convex problems on
finite dimensional Riemannian manifold.

Note that $p\in \Omega$ (where $\Omega := \argmin_S f$)
being a local weak sharp minimizer for the problem
$ \min_{ u \in S} f(u)$ with modulus $\alpha>0$, is
equivalent to $p\in \Omega$ being a local minimizer of the
following perturbed problem:
\begin{equation}
  \label{eq:4}
  \min_{u\in S}(f(u)-\alpha \dist(u; \Omega)).
\end{equation}
Similarly, $\Omega$ being the set of weak sharp minima for
the problem $ \min_{ u \in S} f(u)$ with the modulus
$\alpha > 0 $, is equivalent to $p \in \Omega$ being a
global minimizer of the perturbed problem (\ref{eq:4}). So,
the set of weak sharp minima of an optimization problem is
equivalent to the set of minimizers of a difference
optimization problem.

\begin{remark}
  The properties of weak sharp minima on Riemannian
  manifold could not trivially be advocated as the
  properties of weak sharp minima on a linear space setting.
  Indeed, if we substitute $u$ by $\exp_pw$ in \eqref{eq:4},
  the objective function is converted to
  $(f \circ \exp_p)(w) - \alpha \dist(\exp_pw;\Omega)$. But,
  the local minimum of the converted problem is not
  trivially related to the weak sharp minima of an
  optimization problem on a linear space.
\end{remark}

The homotone property of Fr\'echet subdifferential
(Proposition \ref{pro:homotone}) admits a necessary
optimality condition for a local minimum of a function on a
Riemannian manifold.
\begin{proposition}\label{cor:optimal}
  Let $f\colon\mathcal{M} \to \mathbb{\bar{R}}$ be a
  function on a Riemannian manifold $\mathcal{M}$. Suppose
  that the value of $f$ is finite at $p \in \mathcal{M}$. If
  $p$ is a local minimizer of $f$, then $0 \in \fre f(p)$.
\end{proposition}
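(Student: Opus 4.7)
My plan is to reduce the assertion to the corresponding well-known optimality condition in the Hilbert-space setting by unfolding the exponential-chart definition of the Fr\'echet subdifferential on $\mathcal{M}$, and then to obtain the conclusion either from the homotone property (Proposition \ref{pro:homotone}) applied to a constant minorant, or directly from the defining $\liminf$.

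First, I would rewrite the claim using the definition: since $\fre f(p) := \fre[](f \circ \exp_p)(0)$, proving $0 \in \fre f(p)$ reduces to proving $0 \in \fre[]\tilde f(0)$, where $\tilde f := f \circ \exp_p$ is a function on a neighborhood of $0$ in the Hilbert space $T_p\mathcal{M}$. Since $\exp_p$ is a local diffeomorphism sending $0$ to $p$, the local minimality of $p$ for $f$ at once transfers to local minimality of $0$ for $\tilde f$.

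Next, I would invoke the homotone property with the constant function $\tilde g \equiv \tilde f(0)$, which satisfies $\tilde g \leq \tilde f$ near $0$ and $\tilde g(0) = \tilde f(0)$. A one-line check from the definition of the Fr\'echet subdifferential shows that $\fre[]\tilde g(0) = \{0\}$: for any $x^* \in \fre[]\tilde g(0)$, the defining condition $\liminf_{w \to 0}(-\langle x^*, w \rangle / \|w\|) \geq 0$ forces $x^* = 0$. Combining this with $\fre[]\tilde g(0) \subseteq \fre[]\tilde f(0)$ from Proposition \ref{pro:homotone} yields the desired $0 \in \fre f(p)$.

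The only subtlety I foresee concerns the scope of Proposition \ref{pro:homotone}, which is stated with $g \leq f$ on all of $\mathcal{M}$, whereas here the domination $\tilde g \leq \tilde f$ is only local. This is harmless because the Fr\'echet subdifferential is a local object at the reference point, and the inequality enters the defining $\liminf$ only through values near $0$; one may also shrink the chart domain to make the inequality ``global'' on the chart. Alternatively, to sidestep the issue entirely, one can substitute $\tilde f(w) \geq \tilde f(0)$ directly into the definition of $\fre[]\tilde f(0)$ with $x^* = 0$, obtaining $\liminf_{w \to 0}(\tilde f(w) - \tilde f(0))/\|w\| \geq 0$ immediately. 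Either route poses no real obstacle.
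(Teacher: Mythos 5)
Your proposal is correct and follows essentially the same route as the paper, which gives no explicit proof but states that the result is obtained from the homotone property (Proposition \ref{pro:homotone}) applied, in effect, to the constant minorant $g \equiv f(p)$. Your additional remarks on the chart reduction and the direct substitution $x^*=0$ into the defining $\liminf$ are sound and only make explicit what the paper leaves implicit.
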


Next, we state a simple rule about the Fr\'echet
subdifferential of sum of two functions, which is directly
deduced from the definition.
\begin{proposition}\label{pro-sum}
  Consider functions
  $f_1,f_2\colon\mathcal{M} \to \bar{\mathbb{R}}$ on a
  Riemannian manifold $\mathcal{M}$. Suppose
  $p \in \mathcal{M}$ and $f_1(p),f_2(p) < \infty$. Then, we have
  \begin{equation}
    \label{eq:cal1}
    \fre f_1(p)+ \fre f_2(p) \subset \fre (f_1 + f_2)(p).
  \end{equation}
  Moreover, if one of the $f_i$ is Fr\'echet differentiable,
  then we have equality in (\ref{eq:cal1}).
\end{proposition}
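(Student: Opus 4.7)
The plan is to reduce both sides of the inclusion \eqref{eq:cal1} to the corresponding sum rule in the Banach-space setting via the exponential chart at $p$. Set $g_i := f_i \circ \exp_p$ for $i = 1,2$, defined and finite at $0$ on a neighborhood $U$ of $0 \in T_p\mathcal{M}$. By the very definition of the Fréchet subdifferential on a Riemannian manifold, $\fre f_i(p) = \fre[] g_i(0)$, and since $(f_1 + f_2)\circ \exp_p = g_1 + g_2$ on $U$, also $\fre(f_1 + f_2)(p) = \fre[] (g_1 + g_2)(0)$. Hence \eqref{eq:cal1} is equivalent to the purely Banach-space inclusion $\fre[] g_1(0) + \fre[] g_2(0) \subset \fre[] (g_1 + g_2)(0)$.

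The Banach-space inclusion is immediate from the $\liminf$ definition of $\fre[]$. If $x_i^* \in \fre[] g_i(0)$ for $i = 1,2$, then the two inequalities
$$\liminf_{w \to 0} \frac{g_i(w) - g_i(0) - \langle x_i^*, w \rangle}{\|w\|} \ge 0, \qquad i = 1,2,$$
hold; using the standard fact that $\liminf_{w\to 0}(a_w + b_w) \ge \liminf_{w\to 0} a_w + \liminf_{w\to 0} b_w$ (valid whenever the right-hand side is not of the form $\infty - \infty$) yields the analogous inequality for $x_1^* + x_2^*$ relative to $g_1 + g_2$, establishing $x_1^* + x_2^* \in \fre[] (g_1 + g_2)(0)$. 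Pulling back via $\exp_p$ proves \eqref{eq:cal1}.

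For the equality under differentiability, assume without loss of generality that $f_1$ is Fréchet differentiable at $p$. Since $\exp_p$ is a $C^\infty$ diffeomorphism on a neighborhood of $0$ with $D\exp_p(0) = I_{T_p\mathcal{M}}$, the chain rule gives that $g_1$ is Fréchet differentiable at $0$ and $\fre[] g_1(0) = \{dg_1(0)\}$. To obtain the reverse inclusion, pick any $x^* \in \fre[] (g_1 + g_2)(0)$ and substitute the first-order expansion $g_1(w) = g_1(0) + \langle dg_1(0), w\rangle + o(\|w\|)$ into the $\liminf$ that defines membership in $\fre[] (g_1 + g_2)(0)$; a direct computation then shows $x^* - dg_1(0) \in \fre[] g_2(0)$, so $x^* \in \fre[] g_1(0) + \fre[] g_2(0)$. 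Transferring back through $\exp_p$ yields the claimed equality.

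I do not foresee any serious obstacle: the manifold Fréchet subdifferential is defined precisely via composition with $\exp_p$, so every local, translation-invariant identity of linear variational analysis transfers verbatim, and the only mildly delicate point is invoking $D\exp_p(0) = I$ to transport Fréchet differentiability (and the value of the differential) cleanly between $f_1$ and $g_1$.
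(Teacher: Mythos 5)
Your proof is correct and follows exactly the route the paper intends: the paper offers no written proof, stating only that the result ``is directly deduced from the definition,'' and your reduction via $\exp_p$ to the Banach-space $\liminf$ definition, with superadditivity of $\liminf$ for the inclusion and the first-order expansion of the differentiable summand for the equality, is precisely that deduction. The one detail worth noting explicitly is that since $f_i$ takes values in $\mathbb{R}\cup\{\infty\}$ (never $-\infty$), no $\infty-\infty$ issue can arise in the superadditivity step.
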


Now, we state some necessary conditions of the primal type
and the dual type for a local weak sharp minimizer of an
unconstrained optimization problem.
\begin{theorem}[necessary conditions for a local weak sharp
  minimizer of an unconstrained problem on a Riemannian
  manifold]
  \label{thm:NC-UnConWSP-local}
  Let $\Omega$ be the solution set for problem (\ref{eq:1}).
  Suppose that $S := \mathcal{M}$ and
  $p \in \Omega := \argmin_S f$ is a local weak sharp
  minimizer for the problem $ \min_{ u \in S} f(u)$ with
  modulus $\alpha>0$. Then, the followings holds.

  \begin{enumerate}
  \item[(i)] We have
    \begin{equation}
      \label{eq:UnConstraintWSM}
      \alpha \mathbb{B}_{T^*_p\mathcal{M}} \cap
      \nor{\Omega}(p) \subset \fre f(p).
    \end{equation}
  \item[(ii)] For all $v \in T_p\mathcal{M}$, we have
    \begin{equation}
      \label{eq:UnConstraintWSM2}
      f^-(p;v) \geq \alpha \dist(v; \coti{\Omega}(p)).
    \end{equation}
  \end{enumerate}
\end{theorem}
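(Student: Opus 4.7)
For part (i), I plan to exploit the homotone property of the Fr\'echet subdifferential (Proposition \ref{pro:homotone}). The local weak sharp inequality (\ref{eq:3}) reads $f(u) \geq f(p) + \alpha \dist(u;\Omega)$ for $u$ near $p$, so setting $g(u) := f(p) + \alpha \dist(u;\Omega)$ one has $g \leq f$ locally and $g(p) = f(p)$. Proposition \ref{pro:homotone} then yields $\fre g(p) \subseteq \fre f(p)$. Additive constants do not affect the Fr\'echet subdifferential and the positive scalar $\alpha$ factors out, so $\fre g(p) = \alpha\, \fre \dist(p;\Omega)$. Theorem \ref{thm-nordist} gives $\fre \dist(p;\Omega) = \nor{\Omega}(p) \cap \mathbb{B}_{T_p^*\mathcal{M}}$; multiplying by $\alpha > 0$ and using that $\nor{\Omega}(p)$ is a cone produces $\fre g(p) = \nor{\Omega}(p) \cap \alpha \mathbb{B}_{T_p^*\mathcal{M}}$, which is precisely (\ref{eq:UnConstraintWSM}).

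For part (ii), I plan to push the weak sharp inequality through the liminf defining the contingent directional derivative. For $t > 0$ and $w \in T_p\mathcal{M}$ taken so that $\exp_p(tw)$ lies in the neighborhood of $p$ where (\ref{eq:3}) holds, dividing by $t$ gives
\begin{equation*}
  \frac{f(\exp_p(tw)) - f(p)}{t}
  \;\ge\; \alpha\, \frac{\dist(\exp_p(tw);\Omega)}{t}.
\end{equation*}
Monotonicity of $\liminf$ under $t \downarrow 0$, $w \to v$ then yields $f^-(p;v) \geq \alpha\, d^-_\Omega(p;v)$, and Theorem \ref{thm:dirdis} identifies $d^-_\Omega(p;v)$ with $\dist(v;\coti{\Omega}(p))$, delivering (\ref{eq:UnConstraintWSM2}).

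The main obstacle is the direction of the estimate furnished by Theorem \ref{thm:dirdis}: it supplies only $d^-_\Omega(p;v) \leq \dist(v;\coti{\Omega}(p))$ in general, whereas part (ii) needs the reverse inequality to propagate the lower bound on $f^-$. In finite dimensions Theorem \ref{thm:dirdis} asserts equality and the chain closes painlessly. In genuinely infinite-dimensional Hilbert models one must supplement the argument with a Painlev\'e--Kuratowski-type compactness step, selecting near-minimizers $q_n \in \Omega$ for $\dist(\exp_p(t_n w_n);\Omega)$ and extracting a subsequence along which the rescaled vectors $\exp_p^{-1}(q_n)/t_n$ converge to an element of $\coti{\Omega}(p)$ that realizes $\dist(v;\coti{\Omega}(p))$ from below, thereby restoring the needed identification.
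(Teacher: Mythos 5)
Your argument for part (i) is correct and is essentially the paper's proof: apply the homotone property (Proposition \ref{pro:homotone}) to the minorant $f(p)+\alpha\dist(\cdot;\Omega)$, then use Theorem \ref{thm-nordist} together with the fact that $\nor{\Omega}(p)$ is a cone to rewrite $\alpha\bigl(\nor{\Omega}(p)\cap\mathbb{B}_{T_p^*\mathcal{M}}\bigr)$ as $\alpha\mathbb{B}_{T_p^*\mathcal{M}}\cap\nor{\Omega}(p)$. For part (ii) you also follow the paper's route: divide the weak sharp inequality by $t$, take $\liminf$ as $w\to v$, $t\downarrow 0$ to get $f^-(p;v)\geq \alpha\, d^-_{\Omega}(p;v)$, and then invoke Theorem \ref{thm:dirdis}.

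The obstacle you flag in part (ii) is genuine, and it is worth stressing that the paper's own proof does not resolve it either: Theorem \ref{thm:dirdis} supplies $d^-_{\Omega}(p;v)\leq\dist(v;\coti{\Omega}(p))$, which is the wrong direction, and asserts equality only when $\mathcal{M}$ is finite dimensional, while Theorem \ref{thm:NC-UnConWSP-local} carries no finite-dimensionality hypothesis. However, your proposed repair for the infinite-dimensional case does not work. The rescaled near-minimizers $\exp_p^{-1}(q_n)/t_n$ are merely norm-bounded, and in an infinite-dimensional Hilbert space a bounded sequence need not admit a norm-convergent subsequence; membership in $\coti{\Omega}(p)$ (Definition \ref{def:conti}) requires norm convergence $v_i\to v$, so weak limits are of no use. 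Indeed, taking $f:=\alpha\dist(\cdot;\Omega)$ --- for which $\Omega$ is a set of weak sharp minima with modulus $\alpha$ --- reduces (\ref{eq:UnConstraintWSM2}) exactly to the reverse inequality $d^-_{\Omega}(p;v)\geq\dist(v;\coti{\Omega}(p))$, i.e., to precisely the half of Theorem \ref{thm:dirdis} that is only claimed in finite dimensions. The honest conclusion is therefore not that a compactness step ``restores'' the identification, but that part (ii) should be stated under a finite-dimensionality assumption (where your argument, and the paper's, closes correctly), or else with the right-hand side weakened, e.g.\ to $\alpha\, d^-_{\Omega}(p;v)$.
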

\begin{proof}
  By definition, there exists $\epsilon > 0$ such that
  \[
    f(u) \geq f(p) + \alpha \dist(u; \Omega),\qquad \forall
    u\in \mathbb{B}(p,\epsilon).
  \]
  Since the Fr\'echet subdifferential has homotone property
  (Proposition \ref{pro:homotone}), we have
  $ \fre\alpha\dist(p;\Omega) \subset \fre f(p). $ Theorem
  \ref{thm-nordist} implies
  $\alpha \nor{\Omega}(p) \cap \mathbb{B}_{T_p^*\mathcal{M}}
  \subset \fre f(p)$. So, (i) is proved. Next, we prove
  (ii). Let $v \in T_p\mathcal{M}$. The hypothesis
  guarantees that for all $w$ sufficiently close to $v$ and
  for all sufficiently small $t>0$, we have
  $
    f(\exp_p t w) -f(p) \geq \alpha \dist(\exp_p t w ;
    \Omega),
  $
  which implies
  \[
    \frac{f(\exp_p t w) -f(p)}{t} \geq \alpha
    \frac{\dist(\exp_p t w ; \Omega)-\dist(p;\Omega)}{t}.
  \]
  By taking $\liminf$ of both sides of the latter
  inequality, as $w \to v$ and $t \downarrow 0$, and
  applying Theorem \ref{thm:dirdis}, (ii) is obtained. \qed
\end{proof}
Since every element of the set of global weak sharp minima
is a local weak sharp minimizer, we immediately have the
following corollary.
\begin{corollary}[necessary conditions for unconstrained
  weak sharp minima on a Riemannian manifold]
  \label{pro:NC-UnConWSP}
  Suppose that $S := \mathcal{M}$ and $\Omega := \argmin_Sf$
  is the set of weak sharp minima for the problem $\min_Sf$
  with modulus $\alpha>0$. Then,
  \begin{enumerate}
  \item for every $p \in \Omega$, we have the inclusion
    (\ref{eq:UnConstraintWSM}).
  \item for every $p \in \Omega$ and $v \in T_p\mathcal{M}$,
    we have the inequality (\ref{eq:UnConstraintWSM2}).
  \end{enumerate}
\end{corollary}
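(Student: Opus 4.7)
The plan is to derive this corollary as a direct specialization of Theorem~\ref{thm:NC-UnConWSP-local} applied pointwise on $\Omega$. The entire technical content is already packaged inside that theorem, so the only observation needed is the routine reduction of the global weak sharp minima property to a local weak sharp minima property at each point of $\Omega$.

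First I would fix an arbitrary $p \in \Omega$. Since $\Omega = \argmin_S f$ is the set of global weak sharp minima with modulus $\alpha$, the inequality $f(u) \geq f(p) + \alpha\,\dist(u;\Omega)$ holds for every $u \in S = \mathcal{M}$. In particular, for any fixed $\epsilon > 0$, the very same inequality holds a fortiori for all $u \in S \cap \mathbb{B}(p,\epsilon)$. This is precisely the definition of $p$ being a local weak sharp minimizer with modulus $\alpha$ for the problem $\min_{u \in S} f(u)$.

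Then I would invoke Theorem~\ref{thm:NC-UnConWSP-local} at this $p$: part~(i) yields the inclusion~(\ref{eq:UnConstraintWSM}), and part~(ii) yields the inequality~(\ref{eq:UnConstraintWSM2}) for every $v \in T_p\mathcal{M}$. Because $p$ was arbitrary in $\Omega$, both conclusions of the corollary hold for all $p \in \Omega$. There is essentially no obstacle to overcome here; the substantive ingredients---the homotone property of the Fr\'echet subdifferential combined with Theorem~\ref{thm-nordist} for the dual-type inclusion, and the distance-function directional-derivative estimate from Theorem~\ref{thm:dirdis} for the primal-type inequality---were already expended inside the proof of the local theorem, and the passage from local to global here requires nothing more than quantifying universally over $p$.
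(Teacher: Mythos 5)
Your proposal is correct and matches the paper's own argument exactly: the paper likewise observes that every point of the set of global weak sharp minima is a local weak sharp minimizer and then applies Theorem~\ref{thm:NC-UnConWSP-local} pointwise. Nothing further is needed.
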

 
\begin{remark}
  Similar to Ward \cite{Ward1994}, with some modifications
  of the definition of contingent directional derivative on
  Riemannian manifold, one can state some necessary
  conditions for weak sharp minima of higher orders.
\end{remark}
\begin{remark}
  Similar to the approach of Studniarski and Ward
  \cite{Ward1999} in linear spaces, one can state some
  sufficient conditions for weak sharp minima on Riemannian
  manifold based on a generalization of the contingent
  directional derivative.
\end{remark}

\subsection{Constrained Weak Sharp Minima on Riemannian Manifolds}
In the sequel, we give some necessary conditions for the
weak sharp minima of a constrained optimization problem on a
Riemannian manifold. As said at the beginning of this
section, the set of weak sharp minima is equivalent to the
set of minimizers of a difference optimization problem
(\ref{eq:4}). So, at first, we state some necessary
conditions for minimizers of a difference optimization
problem, i.e., an optimization problem whose cost function
is given in a difference form. To present these necessary
conditions, we use the Fr\'echet and limiting
subdifferentials on a Riemannian manifold. Consider the
difference optimization problem with the geometric
constraint:
\begin{equation}\label{eq:diffGeoCon}
  \min_{u\in S} f(u),
\end{equation}
where $f\colon\mathcal{M}\to\bar{\mathbb{R}}$ may be
represented as $f=f_1-f_2$. With the help of indicator
function on the set $S$, constrained optimization problem
(\ref{eq:diffGeoCon}) can be rewritten in an unconstrained
form:
\begin{equation}\label{eq:diffGeoUnCon}
  \min_{u\in \mathcal{M}} f(u) + \delta_S(u).
\end{equation}
To present some necessary optimality conditions for the
unconstrained optimization problem, we need to decompose the
subdifferential of the objective function of problem
(\ref{eq:diffGeoUnCon}). We say that $f$ is Fr\'echet
decomposable at $p\in S$ on $S$, when
\begin{equation}\label{eq:5}
  (f + \delta_{S})(p)\subset \fre f(p) + \nor{S}(p).
\end{equation}
Note that this definition is a Riemannian manifold
counterpart of the one used by Mordukhovich et al.
\cite{Mordu2006FreSubCal} in the linear space setting. If
$f$ is Fr\'echet differentiable at $p$, then Proposition
\ref{pro-sum} implies that $f$ is Fr\'echet decomposable at
$p\in S$ on $S$. Moreover, by \cite[Proposition
4.3]{LiMordukhovichWangYao}, for a convex function $f$ with
$\dom f$ having nonempty interior and a nonempty convex set
$S$ such that $S \cap \dom f$ is convex, we have that $f$ is
Fr\'echet decomposable on $S$ at every point in
$\mathrm{int}(\dom f)\cap S$. According to the attractive
form of the calculus for the limiting subdifferential of a
Lipschitzian function, decompose condition (\ref{eq:5}) for
the limiting subdifferential is at hand. In the first
assertion of the following theorem, we impose the
decomposition assumption for the Fr\'echet subdifferential
on a Riemannian manifold, while the second part is justified
without this assumption via the limiting subdifferential on
a Riemannian manifold  and its attractive forms in the
Lipschitzian case, i.e., if
$f_1\colon\mathcal{M} \to \bar{\mathbb{R}}$ is Lipschitzian
around $p \in \mathcal{M}$ and
$f_2\colon\mathcal{M} \to \bar{\mathbb{R}}$ is an l.s.c.
function and is finite at $p$, then
\begin{equation}\label{eq:limsubsum}
  \limsub(f_1+f_2)(p) \subset \limsub f_1(p) + \limsub f_2(p).
\end{equation}

\begin{theorem}[necessary conditions for difference problems
  with geometric constrains]
  \label{thm:NC-diffGeoCon}
  Suppose that $p$ is a local solution of
  (\ref{eq:diffGeoCon}), and $f$ is represented as
  $f=f_1-f_2$, where
  $f_i\colon\mathcal{M}\to\bar{\mathbb{R}}$ is finite at
  $p$. Then,
  \begin{enumerate}
  \item[(i)] If $f_1$ is Fr\'echet decomposable at $p\in S$
    on $S$, then we have
    \begin{equation}
      \label{eq:NOCDiffProGeoCon}
      \fre f_2(p)\subset \fre f_1(p)+\nor{S}(p).
    \end{equation}
    Particularly, when $f \equiv -f_2$, we have
    $ \fre f_2(p) \subset \nor{S}(p). $
  \item[(ii)] If $f_1$ is Lipschitzian around $p$ and $S$ is
    closed, then we have
    \[
      \fre f_2(p) \subset \limsub f_1(p) + \limnor{S}(p).
    \]
  \end{enumerate}
\end{theorem}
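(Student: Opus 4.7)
The plan is to rewrite the constrained problem (\ref{eq:diffGeoCon}) as the unconstrained problem (\ref{eq:diffGeoUnCon}) via the indicator function, and then, for each part, to transfer a Fr\'echet subgradient of $f_2$ to the composite $f_1+\delta_S$ by using the subgradient inequality in the local minimum condition. The main engine of the argument is the observation that if $x^*\in\fre f_2(p)$ and $p$ is a local minimizer of $f_1-f_2+\delta_S$ on $\mathcal{M}$, then the defining inequality of the Fr\'echet subdifferential forces $x^*\in\fre(f_1+\delta_S)(p)$. After that, (i) follows by invoking the decomposability assumption (\ref{eq:5}) for $f_1$, and (ii) follows by passing to limiting subgradients and applying the Lipschitzian sum rule (\ref{eq:limsubsum}).

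More precisely, for part (i) I would fix $x^*\in\fre f_2(p)$. By definition, for every $\varepsilon>0$ there is a neighborhood of $p$ in which
\[
f_2(u)\ge f_2(p)+\langle x^*,\exp_p^{-1}u\rangle-\varepsilon\,d(u,p)+o(d(u,p)).
\]
Since $p$ locally minimizes $f_1-f_2$ on $S$, for all $u\in S$ near $p$ one has $f_1(u)-f_1(p)\ge f_2(u)-f_2(p)$, so substituting the previous inequality gives
\[
f_1(u)+\delta_S(u)\ge f_1(p)+\delta_S(p)+\langle x^*,\exp_p^{-1}u\rangle-\varepsilon\,d(u,p)+o(d(u,p)),
\]
which is precisely the statement that $x^*\in\fre(f_1+\delta_S)(p)$. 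The Fr\'echet decomposability hypothesis on $f_1$ then yields $x^*\in\fre f_1(p)+\nor{S}(p)$, which is (\ref{eq:NOCDiffProGeoCon}). The special case $f\equiv -f_2$ corresponds to $f_1\equiv 0$, which is trivially Fr\'echet decomposable (it is smooth with zero differential, so Proposition~\ref{pro-sum} gives equality in (\ref{eq:5})), and one reads off $\fre f_2(p)\subset\nor{S}(p)$.

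For part (ii) I would repeat the first step verbatim to obtain $\fre f_2(p)\subset\fre(f_1+\delta_S)(p)$ without invoking any decomposability of $f_1$. Since every Fr\'echet subgradient is also a limiting subgradient, this yields
\[
\fre f_2(p)\subset\limsub(f_1+\delta_S)(p).
\]
Because $S$ is closed, $\delta_S$ is l.s.c.\ and finite at $p$, while $f_1$ is Lipschitzian around $p$ by hypothesis. Hence the Lipschitzian sum rule (\ref{eq:limsubsum}) applies and gives $\limsub(f_1+\delta_S)(p)\subset\limsub f_1(p)+\limsub\delta_S(p)$. Using the identity $\limsub\delta_S(p)=\limnor{S}(p)$ recorded in Section~3, we conclude $\fre f_2(p)\subset\limsub f_1(p)+\limnor{S}(p)$, as required.

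The only real subtlety I anticipate is the passage from the local minimum inequality $f_1(u)-f_2(u)\ge f_1(p)-f_2(p)$ to the Fr\'echet subgradient inclusion: one must keep the $\varepsilon\,d(u,p)+o(d(u,p))$ error terms on the correct side so that the resulting inequality for $f_1+\delta_S$ matches the $\liminf$ form in the definition of $\fre$. This is purely formal but must be written out carefully, since everything else reduces to quoting Proposition~\ref{cor:optimal}, the decomposability hypothesis, and the sum rule (\ref{eq:limsubsum}).
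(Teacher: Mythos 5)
Your proposal is correct and follows essentially the same route as the paper: both reduce the constrained problem to the unconstrained one via $\delta_S$, establish the key inclusion $\fre f_2(p)\subset\fre(f_1+\delta_S)(p)$, and then conclude (i) from the decomposability hypothesis and (ii) from the limiting sum rule \eqref{eq:limsubsum}. The only difference is cosmetic: the paper obtains that key inclusion by quoting the Fermat rule (Proposition~\ref{cor:optimal}) together with the sum rule (Proposition~\ref{pro-sum}) applied to $(f_1+\delta_S-f_2)+f_2$, whereas you verify it directly from the $\liminf$ definition of the Fr\'echet subdifferential.
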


\begin{proof}
  Problem \eqref{eq:diffGeoCon} can be written as the
  following unconstrained difference problem:
  $
    \min_{x\in \mathcal{M}} [f_1(x)+ \delta_{S}(x) - f_2(x)].
  $
  ‌‌‌By Corollary~\ref{cor:optimal}, we have
  $ 0 \in \fre(f_1(x)+ \delta_{S}(x) - f_2(x))(p). $
  Proposition \ref{pro-sum} implies
  \begin{equation}\label{eq:proofNC-diffGeoCon1}
    \fre f_2(p)\subset \fre (f_1 + \delta_{S})(p).
  \end{equation}
  If $f_2$ is Fr\'echet decomposable at $p \in S$ on $S$,
  then we have $\fre f_2(p)\subset \fre f_1(p)+\nor{S}(p)$.
  So, (i) is proved. Now, we prove part (ii). Since $S$ is
  assume to be closed, $\delta_{S}$ is l.s.c and the
  inclusion \eqref{eq:proofNC-diffGeoCon1} with the sum rule
  for the limiting subdifferential (\ref{eq:limsubsum}) imply
  \begin{align*}
    \fre f_2(p) &\subset  \fre (f_1 + \delta_{S})(p)
                  \subset \limsub (f_1 + \delta_{S})(p) \\
		&\subset \limsub f_1(p) + \limsub \delta_{S}(p) =
           \limsub f_1(p) + \limnor{S}(p),
  \end{align*}
  and the proof of (ii) is complete. \qed
\end{proof}

In the following, as a corollary of
Theorem~\ref{thm:NC-diffGeoCon}, we can state some necessary
conditions for the set of weak sharp minima of a constrained
optimization problem. The result can be established using
Theorem~\ref{thm:NC-diffGeoCon} by reducing weak sharp
minima to minimizers of a constrained difference
optimization problem.
\begin{corollary}[necessary conditions for weak sharp minima
  under geometric constraints on Riemannian manifolds]
  Suppose $\Omega := \argmin_S f$ is the set of weak sharp
  minima for the problem $\min_Sf$ with modulus $\alpha>0$.
  Then, we have
  \begin{enumerate}
  \item If $p\in \Omega$ and $f$ is Fr\'echet decomposable
    on $S$ at the point $p$, then
    \begin{equation*}
      \alpha \mathbb{B}_{T^*_p\mathcal{M}} \cap
      \nor{\Omega}(p) \subset \fre f(p)+\nor{S}(p).
    \end{equation*}
  \item If $f$ is Lipschitzian around $p\in S$ and $S$ is
    closed, then we have
    \begin{equation*}
      \alpha \mathbb{B}_{T^*_p\mathcal{M}} \cap
      \nor{\Omega}(p) \subset \limsub
      f(p)+\limnor{S}(p).
    \end{equation*}
  \end{enumerate}
\end{corollary}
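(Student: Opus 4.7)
The plan is to reduce the weak sharp minima property to a local solution of a constrained difference problem and then invoke Theorem~\ref{thm:NC-diffGeoCon} directly.

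First I would recall the equivalence highlighted at the beginning of Section~\ref{sec:necess-cond-weak}: $\Omega = \argmin_S f$ being the set of weak sharp minima with modulus $\alpha$ is equivalent to every $p \in \Omega$ being a global, and hence \emph{a fortiori} local, minimizer on $S$ of the perturbed cost $u\mapsto f(u) - \alpha\dist(u;\Omega)$. Setting $f_1 := f$ and $f_2 := \alpha\dist(\cdot;\Omega)$, each $p\in\Omega$ is then a local solution of the constrained difference problem~\eqref{eq:diffGeoCon} associated with this pair.

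For part~(i), Fr\'echet decomposability of $f = f_1$ at $p$ on $S$ lets me apply Theorem~\ref{thm:NC-diffGeoCon}(i), giving $\fre f_2(p)\subset \fre f(p)+\nor{S}(p)$. Invoking positive homogeneity of the Fr\'echet subdifferential together with Theorem~\ref{thm-nordist}, the left-hand side rewrites as
\[
\fre f_2(p) \;=\; \alpha\,\fre\dist(p;\Omega) \;=\; \alpha\bigl(\nor{\Omega}(p)\cap\mathbb{B}_{T_p^*\mathcal{M}}\bigr) \;=\; \nor{\Omega}(p)\cap\alpha\mathbb{B}_{T_p^*\mathcal{M}},
\]
the last equality using that $\nor{\Omega}(p)$ is a cone. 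This is precisely the asserted inclusion. For part~(ii), the Lipschitz assumption on $f = f_1$ and closedness of $S$ allow me to invoke Theorem~\ref{thm:NC-diffGeoCon}(ii) in place of~(i); the identical computation of $\fre f_2(p)$ then delivers $\alpha\mathbb{B}_{T_p^*\mathcal{M}}\cap\nor{\Omega}(p)\subset\limsub f(p)+\limnor{S}(p)$.

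There is essentially no substantive obstacle here: the argument is purely bookkeeping routed through the two heavy-duty results already established, namely Theorem~\ref{thm-nordist} (subdifferential of the distance function, powered by the local distance lemma) and Theorem~\ref{thm:NC-diffGeoCon} (necessary conditions for the geometric-constraint difference problem). The only elementary checks required are the scalar-multiple rule $\fre(\alpha h)(p)=\alpha\,\fre h(p)$ for $\alpha>0$, immediate from the definition via the exponential chart, and the cone-ball identity $\alpha(K\cap B)=K\cap\alpha B$ for a cone $K$.
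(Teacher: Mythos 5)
Your proposal is correct and follows exactly the route the paper indicates for this corollary: reduce the weak sharp minima property to each $p\in\Omega$ being a (local) minimizer of the constrained difference problem with $f_1=f$ and $f_2=\alpha\dist(\cdot;\Omega)$, apply Theorem~\ref{thm:NC-diffGeoCon}, and identify $\fre f_2(p)$ via Theorem~\ref{thm-nordist} together with positive homogeneity and the cone--ball identity. The paper only sketches this reduction in one sentence; your write-up supplies the same argument with the elementary details made explicit.
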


\section{Application}
 
Here, we give an application of the nonconvex weak sharp
minima on Riemannian manifolds. We will show how the notion
of weak sharp minima and the results of the previous
sections can be used to model a discrete optimization
problem as an unconstrained optimization problem on a
Stiefel manifold. Recall that for integers $0<k \leq n$, the
Stiefel manifold $St(n,k)$ is defined as the set of all
matrices $U \in \mathbb{R}^{n\times k}$, with $U^TU=I_k$,
where $U^T$ denotes the transpose of $U$ and $I_k$ denotes
the $k \times k$ identity matrix. For each $P \in St(n,k)$,
we have
\begin{equation*}
  T_{P}St(n,k) =
  \{ X \in \mathbb{R}^{n \times k} \, |\,
  X^{T}P + P^{T}X = 0 \},
\end{equation*}
and we endow $T_{P}St(n,k)$ with the induced metric of the
Euclidean space $\mathbb{R}^{n\times k}$:
$\langle X,Y \rangle := \tr( X^TY)$, for
$X,Y \in \mathbb{R}^{n\times k}$, also we identify
$T_{P}^{*}St(n,k)$ with $T_PSt(n,k)$, by the natural
isomorphism induced by this Riemannian metric. For more
details on the properties of the Stiefel manifold, see
\cite{absil2007optimization}. For each matrix
$A = [a_{ij}]$, denote
$\|A\|_{\beta} := (\sum_{i,j}|a_{ij}|^{\beta})^{1/\beta}$
and $A^-:=[\max \{-a_{ij},0\}].$

\begin{example} Let $G$ be a graph with vertices
  $V(G) := \{v_1,\ldots v_n\}$ and Edges $E(G)$. Fix integer
  $k>0$. Denote
  $D_k(G) := \{ (A_1, \ldots, A_k) \colon \emptyset \neq
  A_i\subset V(G), A_i \cap A_j = \emptyset, \text{ for all
    $i \neq j$}\}$ as the set of all $k$ sub-partitions of
  $V(G)$. Define $\partial A$, for a subset $A$ of $V(G)$,
  as the set of all edges of $G$ whose only one endpoint
  belongs to $A$. Now, consider the following Cheeger type
  constant of $G$:
\begin{equation}\label{eq:CheegerConstant}
  \gamma_{k}(G) := \min \sum_{i=1}^k \frac{|\partial A_i|}{\sqrt{|A_i|}},
\end{equation}
where the minimum runs over all
$(A_1\ldots,A_k) \in D_k(G)$. The constant $\gamma_k(G)$
indicates how well $G$ can be partitioned into $k$ clusters.
For more information on the Cheeger constant of graphs and
its applications to clustering, see
\cite{MR2644242,MR3831150,MR2409803}. For each
$\alpha := (\alpha_1,\ldots,\alpha_n) \in \mathbb{R}^{n}$,
define
$\|\nabla \alpha \|_1 := \sum_{\{v_i,v_j\} \in E(G)}|
\alpha_i-\alpha_j|$. By a technique similar to that of
Rothaus \cite{MR812396}, one can show\footnote{The key
  point is to show that for each vector
  $u := (u_1, \ldots, u_n)\in \mathbb{R}^n$ with
  $\|u\|_{2}=1$, there is a non-empty subset
  $A \subset \{ i | u_i \neq 0 \}$ such that
  $\frac{|\partial A|}{\sqrt{|A|}} \leq \|\nabla u \|_1$.}
that the discrete minimization problem
\eqref{eq:CheegerConstant}, is equivalent to the following
continuous optimization problem on a manifold with
non-negativity constraints:
\begin{equation}
  \label{eq:11}
  \min \sum_{i=1}^k \|\nabla u_i\|_1,
\end{equation}
where the minimum runs over all
$U := (u_1, \ldots, u_k) \in St(n,k)$ with non-negative
entries (which hereafter is denoted by $St_{+}(n,k)$). The
non-negativity constrains provide a combinatorial nature to the
feasible set of the problem \eqref{eq:11}. But, since the
cost function of \eqref{eq:11} is Lipschitzian on $St(n,k)$
with a rate $C$, the penalization lemma (see
\cite[Proposition 1.121]{PenotBook2013}) implies that the
problem \eqref{eq:11} is equivalent to the following
unconstrained optimization problem on a Stiefel manifold:
\begin{equation}
  \label{eq:12}
  \min_{U \in St(n,k)} \sum_{i=1}^k\|\nabla u_i\|_1 + C \dist(U; St_+(n,k)).
\end{equation}
Although computing the distance term in \eqref{eq:12} is not
a simple task, a simple observation shows that one can
replace the distance term with any upper estimate $h(U)$
which is zero if and only if the distance term is zero, or
equivalently, with a function $h(U)$ such that $0$ is the
minimum value of $h$, the set of minimizers of $h$ is
$St_+(n,k)$, and for every $U \in St(n,k)$,
\begin{equation}
  \label{eq:13}
  \dist(U; St_+(n,k)) \leq h(U).
\end{equation}
In other words, $St_+(n,k)$ should be the set of weak sharp
minima of the problem $\min_{St(n,k)}h(U)$ with $0$ as the
optimal value. Since the distance of a matrix
$U \in St(n,k)$ from the non-negative matrices in
the Euclidean space $\mathbb{R}^{n\times k}$ is a function of
the negative part of $U$, it seems that a natural candidate
for $h(U)$ is a function of $U^{-}$. So, we investigate the
following question:

Is $St_+(n,k)$ the set of weak sharp minima for the problem
of minimizing
$f_{\beta}(U):=\|U^-\|_{\beta}^{\beta} $
over the Stiefel manifold $St(n,k)$, for some $\beta > 0$?

According to Corollary
\ref{pro:NC-UnConWSP}, a necessary condition for provision of an
affirmative response to this question is that there
exists $\alpha>0$ such that, for every
$P \in St_+(n,k)$,\newcommand{\freS}{
  \hat{\partial}_{St(n,k)}} \newcommand{\norS}{
  \hat{N}_{St_+(n,k)}^{St(n,k)}} \newcommand{\norSR}{
  \hat{N}_{St_+(n,k)}^{\mathbb{R}^{n\times k}}}
\begin{equation}\label{eq:14}
  \alpha \mathbb{B}_{T_P St(n,k)} \cap \norS(P) \subset
  \freS f_{\beta}(P).
\end{equation}
Fix $P := [p_{ij}] \in St_{+}(n,k)$ and, for simplicity,
suppose that only the first $t$ rows of $P$ are non-zero
(for some $t \geq 0$). For each $n\times k$ matrix $A$,
Denote \newcommand{\topM}[1]{\widetilde{#1}} $\topM{A}$ by
the matrix whose rows are the first $t$ rows of $A$ and
\newcommand{\tailM}[1]{\widehat{#1}} $\tailM{A}$ by the
matrix whose rows are the last $n-t$ rows of $A$. According
to \eqref{eq:17}, we have $X \in \norS (P)$, if and only if
$X \in T_PSt(n,k)$ and
$
 \limsup _{}\langle X, U-P
\rangle / \| U - P \| \leq 0
$,
where the $\limsup$ runs over all $U$ approaching $P$, in
$St_{+}(n,k)$. By approaching $P$, from some suitable curves
in $St_+(n,k)$, one can see
\begin{equation}
  \label{eq:15}
  \norS (P) =
  \big\{
  X \in T_{P}St(n,k)\ \big| \ 
   \tailM{X} \leq 0, \topM{X} \circ \topM{P} = O_{t\times k}
  \big\},
\end{equation}
where $\circ$ is the entry-wise product, $O_{t\times k}$ is the
$t \times k$ zero matrix, and $\tailM{X} \leq 0$ means that
$\tailM{X}$ is a non-positive matrix. Next, we will
investigate when the necessary condition \eqref{eq:14}
holds. First, note that if $\beta > 1$, then $f_{\beta}$ is
smooth. So, the subdifferential $ \freS f_{\beta}(P)$ has
only one element. But, there are some $P \in St_{+}(n,k)$
such that the left hand of \eqref{eq:14} have infinitely
many elements. So, for $\beta > 1$, the necessary condition
\eqref{eq:14} does not hold.

Next, we will show that the necessary condition
\eqref{eq:14} holds, for each $ \beta <1$, with
$\alpha = 1$. Consider an arbitrary element
$X \in \mathbb{B}_{T_P St(n,k)} \cap \norS(P)$. We will show
$X \in \freS f_{\beta}(P)$. Define the smooth function
$\bar{g}(U)=\langle X, \exp_P^{-1}U \rangle$ on a sufficiently
small neighborhood of $P$. For simplicity, denote
$dU = \exp_{P}^{-1}U$. Note that since
$X \in \mathbb{B}_{T_P St(n,k)}$, we have
$\tr(X^TX) \leq 1$. So, the absolute value of the entries of
$\tailM{X}$ are at most equal to $1$. Moreover, $\tailM{X} \leq 0$. Thus,
$\langle \tailM{X}, \tailM{dU} \rangle \leq
\|\tailM{dU}^{-}\|_{1}$. Therefore, by the Cauchy--Schwarz
inequality, we have
\newcommand{\z}[1]{I_{#1}}
\begin{equation}\label{eq:9}
  \bar{g}(U) = \langle \topM{X} , \topM{dU} \circ \topM{\z{P}} \rangle + \langle \tailM{X} , \tailM{dU}\rangle  \leq \| \topM{dU} \circ \topM{\z{P}} \|_2 + \|\tailM{dU}^{-} \|_{1},
\end{equation}
where $\z{P}$ is the $n \times k$ matrix whose $(i,j)$th
entry is $1$ if $p_{ij}=0$ and is $0$ otherwise. One can see
that the function $\|\cdot^{-}\|_{1}$ is a norm on
$S :=\{ V \in \mathbb{R}^{t \times k} | \topM{P}^{T} V +
V^{T} \topM{P} = 0 \}$. Indeed, the function
$\|\cdot^{-}\|_1$ satisfies the triangle inequality, since
for each real $a,b$ we have $(a + b)^- \leq a^- + b^-$. So,
it is enough to show that $V = 0$ whenever $V \in S$ and
$\|V^{-}\|_1 = 0$. This follows from the non-negativity of
the entries of $P$ and that $P$ has no zero row. Now, since
$dU \in T_PSt(n,k)$, we have $\topM{dU} \in S$ and
$\topM{dU} \circ \topM{\z{P}} \in S$. By the equivalence of
norms on a finite dimensional linear space, we have
$\| \topM{dU} \circ \topM{\z{P}} \|_2 \leq C_P\|(\topM{dU}
\circ \topM{\z{P}})^-\|_1$, for some $C_{P} > 1$. Note that
$ \tailM{dU} = \tailM{dU} \circ \tailM{\z{P}}$. So, from
\eqref{eq:9}, we have
\begin{equation}\label{eq:16}
  \bar{g}(U) \leq
  C_{p}(\| (\topM{dU} \circ \topM{\z{P}})^{-} \|_1 +
  \|(\tailM{dU} \circ \tailM{\z{P}})^- \|_{1}) =
  C_P \| (dU \circ \z{P})^-\| _1.
\end{equation}
Now, consider the function
$g(U) := \bar{g}(U) +C_P\|(U\circ \z{P})^-\|_1- C_p\|(dU \circ
I_P)^-\|_1$. Since
$\|(U\circ \z{P})^-\|_1- \|(dU \circ I_P)^-\|_1$ is of order
$o(\|U-P\|)$ (note that $dU = (U-P) + o(\|U-P\|)$,
  the function $\|\cdot^{-}\|_1$ satisfies the triangle
  inequality, and $P \circ I_{P} = O_{n\times k}$),
 we have $dg(P) = d\bar{g}(P) = X$.
Therefore, as \eqref{eq:16}, we have 
\begin{eqnarray}
  g(U) &\leq& C_{P}\|(U \circ \z{P})^-\|_1
       \leq
               C_p \|U^-\|_1 = \|U^-\|_1^{\beta} (C_P\|U^-\|_1^{1-\beta}) \notag
  \\ &\leq& \|U^-\|_1^{\beta} \leq \|U^-\|_{\beta}^{\beta},\label{eq:18}
\end{eqnarray}
on a sufficiently small neighborhood of $P$, in which
$C_P\|U^-\|_1^{1-\beta} \leq 1$ (the last inequality of
\eqref{eq:18} follows from the fact that for each
$a,b \geq 0$ and $0< \beta <1$, we have $(a + b)^{\beta}
\leq a^{\beta} + b^{\beta}$). Thus, $g \leq f_{\beta}$
on a neighborhood of $P$.  Since
$g(P)=f_{\beta}(P)$ and $dg(P) = X$, as the
homotone property of the Fr\'echet subdifferential, we have
$X \in \freS f_{\beta}(P)$. \qed
\end{example}
\section{Concluding Remarks}
We presented a lemma and used it to derive some local
properties of a distance function on a Riemannian manifold
using the corresponding properties in linear space. In
this regard, we established a relation between the Fr\'echet
subdifferential (directional derivative) of a distance
function and a normal cone (contingent cone) on a Riemannian
manifold. Then, we established some primal and dual
necessary conditions for the set of weak sharp minima of
nonconvex optimization problems on Riemannian manifold.
 As an application, we showed how the notion of
weak sharp minima and our stated results can be used to model
a Cheeger type constant of a graph as an unconstrained
optimization problem on a Stiefel manifold. 


\begin{acknowledgements}
  The authors thank Sharif University of Technology for
  supporting this work. The first author is grateful to Amir
  Daneshgar, Mostafa Einollahzadeh, Mehdi
  Shaeiri, and  Mostafa Kiyaee for constructive discussions.
\end{acknowledgements}

\bibliographystyle{spmpsci_unsrt} 
\bibliography{FrechetBib1} 

\end{document}